\documentclass[a4paper]{amsart}

\usepackage{amsmath,amstext,amssymb,mathrsfs,amscd,amsthm,indentfirst}
\usepackage{amsfonts}
\usepackage{enumerate}
\usepackage{verbatim}
\usepackage{pstricks}
\usepackage{graphicx}
\usepackage{stmaryrd}
\usepackage{bbm}
\usepackage{lipsum}
\usepackage{hyperref}

\usepackage{tikz, tikz-cd}

\theoremstyle{plain}
\newtheorem{theorem}{Theorem}%[section]

\newtheorem{lemma}[theorem]{Lemma}

\theoremstyle{definition}

\numberwithin{equation}{section}

\newcommand{\cat}[1]{\mathsf{#1}}

\newcommand{\bR}{\mathbb{R}}
\newcommand{\bZ}{\mathbb{Z}}

\newcommand{\Sp}{\mathrm{Sp}}

\newcommand{\Diff}{\mathrm{Diff}}

\newcommand{\map}{\operatorname{map}}

\newcommand{\Aut}{\mathrm{Aut}}

%Oscar's macros

\newcommand\lra{\longrightarrow}

\newcommand\End{\mathrm{End}}

\newcommand\Ker{\operatorname*{Ker}}

\newcommand{\Top}{\mathrm{Top}}
\newcommand{\G}{\mathrm{G}}
\newcommand{\OO}{\mathrm{O}}
\newcommand{\fr}{\mathrm{fr}}

\title{TQFTs do not detect the Milnor sphere}

\author{Ben Gripaios}
%\thanks{}
\email{bmg33@cam.ac.uk}
\address{Cavendish Laboratory \\ University of Cambridge \\
JJ Thomson Avenue \\ 
Cambridge CB3 0US\\ UK}

\author{Oscar Randal-Williams}
%\thanks{}
\email{or257@cam.ac.uk}
\address{Centre for Mathematical Sciences\\
Wilberforce Road\\
Cambridge CB3 0WB\\
UK}

\begin{document}

%\setcounter{tocdepth}{1}
%\tableofcontents
\begin{abstract}
  We show that, under very general hypotheses, topological quantum
  field theories (TQFTs) cannot detect homotopy spheres bounding
  parallelisable manifolds, such as Milnor's exotic
  7-dimensional sphere. The result holds for a wide variety of target
  categories (or $(\infty,n)$-categories) and arbitrary tangential structures. An appendix contains
  results on the mapping class groups of (stably-) framed manifolds
  that may be of independent interest.  
  \end{abstract}
	
	\maketitle
 
An interesting question for mathematicians and physicists alike is: to
what extent do functorial topological quantum field theories (TQFTs) detect
exotic smooth manifolds? Indeed, mathematicians might hope to be able
to use the cutting and pasting properties of a suitable TQFT to decide
whether or not an exotic 4-sphere exists, while physicists would like to understand more generally the relevance of exotic structures for physics. 

There has been some progress on this question recently, which may be summarised as follows. 
On the one hand, in \cite{Reutterdont} (in which  a TQFT was defined to be a
symmetric monoidal functor from the oriented bordism category to the
category of vector spaces over an algebraically-closed
field), it was shown that no
4-dimensional semisimple TQFT 
 can detect exotic 4-manifolds. On the other hand, 
it was shown in \cite{Reutterdo} that in higher dimensions there exist
semisimple TQFTs (now valued in supervector spaces and either with or
without an orientation) that detect Hitchin spheres, i.e.\ exotic spheres
that do not bound spin manifolds, and which exist precisely in
dimensions $8k+1$ and $8k+2$, for $k \geq 1$.

These results prompted a question in \cite[Question 1.3]{Reutterdo}, whose most general version is: do semisimple TQFTs detect all exotic spheres in dimension $>4$? 

Here we show that the answer is: no. A counterexample is Milnor's
prototypical exotic 7-sphere (which bounds a parallelisable, ergo
  spinnable, manifold). 
In fact, our results are much stronger, in that they: (i) require no
semisimplicity hypothesis; (ii) apply not
just to spheres, but rather to arbitrary bordisms; (iii) hold for a large class of target
categories generalising that of vector spaces;  (iv) hold for 
bordism
categories in which the manifolds are equipped not just with an
orientation, but with an
arbitrary tangential structure; (v) also apply to TQFTs that
are extended upwards and/or downwards. The last of these allows us to
show that so-called `cohomological TQFTs', taking values in the
$(\infty,1)$-category of chain complexes of vector spaces, do not detect the Milnor
7-sphere either.

We begin with a simple statement relevant for physics, as
follows.
\begin{theorem} \label{thm:nofrills}
Let $\Sigma$ be a $(4k-1)$-dimensional oriented
homotopy sphere that
bounds a parallelisable
$4k$-dimensional manifold, and let $F : \cat{Bord}^{SO}_{4k-1} \to \cat{Vect}_\Bbbk$ be an oriented TQFT
taking values in the category of vector spaces over a field
$\Bbbk$. Then for any nonempty $(4k-1)$-dimensional oriented bordism $ M : P \leadsto Q$ we have
$$F(M \# \Sigma) = F(M) : F(P) \lra F(Q)$$
for any choice of connected-sum.
\end{theorem}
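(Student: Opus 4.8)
The plan is to localise the difference between $M\#\Sigma$ and $M$ to a single disc, and then to resolve that local difference using the parallelisable filling of $\Sigma$. Fix a disc $D^{4k-1}\subset\inter(M)$ used to form the connected sum and set $M_0:=M\setminus\inter(D^{4k-1})$, viewed as a bordism from $P\sqcup S^{4k-2}$ to $Q$; then $M=M_0\circ(D^{4k-1}\sqcup\id_P)$ and, with $\Sigma^\circ:=\Sigma\setminus\inter(D^{4k-1})$, also $M\#\Sigma=M_0\circ(\Sigma^\circ\sqcup\id_P)$, where $D^{4k-1}$ and $\Sigma^\circ$ are now regarded as bordisms $\emptyset\leadsto S^{4k-2}$. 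Applying $F$ and cancelling the common factor $F(M_0)$, it suffices to prove $F(\Sigma^\circ)=F(D^{4k-1})$, i.e.\ that the vectors $F(\Sigma^\circ)(1),F(D^{4k-1})(1)\in F(S^{4k-2})$ coincide; equivalently, that the automorphism $F(M_\phi)$ of $F(S^{4k-2})$ fixes the distinguished vector $F(D^{4k-1})(1)$, where $M_\phi$ is the mapping cylinder of a diffeomorphism $\phi$ of $S^{4k-2}$ representing $[\Sigma]\in\Theta_{4k-1}\cong\pi_0\Diff(S^{4k-2})$ (Cerf; for $k=1$ there is nothing to prove, as $\Theta_3=0$, so assume $k\ge2$). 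This shows the content is concentrated in the case $M=Q\times[0,1]$, where moreover $\Sigma\mapsto F((Q\times[0,1])\#\Sigma)$ is a homomorphism from the finite group $\Theta_{4k-1}$ to $\Aut(F(Q))$ and $F(Q)$ is finite-dimensional (being dualisable), so that a trace computation even reduces matters to closed $M$; but the argument I would run is uniform, as follows.

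Since $\Sigma=\partial W$ with $W$ parallelisable, perform surgery on $W$ below the middle dimension relative to $\partial W$ (which preserves parallelisability) to make $W$ $(2k-1)$-connected; then $W$ is built from $D^{4k}$ by attaching $2k$-handles along a framed link of $(2k-1)$-spheres in $S^{4k-1}=\partial D^{4k}$, which may be isotoped into an open disc $\inter(D^{4k-1})\subset S^{4k-1}$. Passing to boundaries, $\Sigma^\circ$ is obtained from $D^{4k-1}$ by a sequence of framed surgeries on null-homotopic $(2k-1)$-spheres of codimension $2k\ge4$, and each such surgery is implemented inside $\cat{Bord}^{SO}_{4k-1}$ by composition with the trace bordism of the surgery read along the boundary, a bordism $S^{4k-2}\leadsto S^{4k-2}$ whose framing datum is precisely the normal framing of the corresponding handle of $W$.

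The crux is to show that this assembled composite of surgery bordisms is invisible to $F$, i.e.\ $F(\Sigma^\circ)=F(D^{4k-1})$. Here parallelisability of $W$ — rather than of $\Sigma$ alone — is used essentially: it is exactly the condition pinning down the assembled framing data of the handles to be that of a \emph{framed} nullbordism, and I would leverage this to rewrite $\Sigma^\circ$ into $D^{4k-1}$ by handle moves together with the zig-zag identities for the dualisable object $F(S^{4k-2})$ (available after applying the symmetric monoidal functor $F$). Carrying out this rewriting requires knowing which reorderings and trivialisations of the handles become available once the framing is remembered, and supplying that information is the role of the appendix's computation of the stably-framed mapping class groups of spheres and their comparison with $\Theta_{4k-1}$: although $[\Sigma]\ne0$ in $\Theta_{4k-1}$, the framed twist produced by a parallelisable filling is constrained enough that the composite it represents becomes $F$-equivalent to the identity, hence fixes $F(D^{4k-1})(1)$. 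I expect this last step — reconciling the exoticness of $\Sigma$ with the blindness of an arbitrary $F$ on the sole strength of a parallelisable filling — to be the main obstacle. Granting it, the theorem follows from the reduction of the first paragraph together with the standard fact that $M\#\Sigma$ is independent of all choices once $M$ is non-empty.
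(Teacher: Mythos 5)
Your opening reduction is correct and loses nothing: writing $M\#\Sigma=M_0\circ(\Sigma^\circ\sqcup\mathrm{id}_P)$ and cancelling $F(M_0)$, the theorem is equivalent to the single identity $F(\Sigma^\circ)=F(D^{4k-1})$ in $F(S^{4k-2})$, i.e.\ to the statement that $F(M_\phi)$ fixes the disc vector, where $[\phi]\in\pi_0\Diff^+(S^{4k-2})\cong\Theta_{4k-1}$ represents $\Sigma$ (this is just the case $M=D^{4k-1}$ of the theorem). But everything after that is a plan rather than a proof, and the plan is missing the one idea that makes the statement true. Working on the sphere $S^{4k-2}$ itself gives no leverage: $\Theta_{4k-1}$ is finite, hence residually finite, so there is no group-theoretic reason for $F(M_\phi)$ to act trivially on anything; and handle moves plus zig-zag identities for the dualisable object $F(S^{4k-2})$ cannot by themselves yield $F(\Sigma^\circ)=F(D^{4k-1})$ --- if such a rewriting were valid in $\cat{Bord}^{SO}_{4k-1}$ it would show $\Sigma^\circ\cong D^{4k-1}$, which is false, and you offer no mechanism by which it becomes valid only after applying an arbitrary symmetric monoidal $F$. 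You flag this step (``Granting it'') as the main obstacle, and it is: it is the entire content of the theorem. Moreover the appendix you appeal to computes finite residuals of (stably-)framed mapping class groups of $W_{g,1}$, not of spheres, and is needed only for the arbitrary-tangential-structure version; it does not supply the rewriting you need.

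The paper closes exactly this gap by changing the supporting manifold: embed $V_g=\natural_g S^{2k-1}\times D^{2k}$ with $g\ge 2$ in a disc in $\inter(M)$, and realise connected sum with $\Sigma$ as regluing $V_g$ along the diffeomorphism $\psi''$ of $W_g=\#_g S^{2k-1}\times S^{2k-1}$ obtained by implanting a disc representative of $[\Sigma]$. Because $\Sigma$ bounds a parallelisable manifold, $[\psi'']$ lies in the finite residual of $\pi_0\Diff^+(W_g)$ for $g\ge 2$ by \cite{KKM}; since $\pi_0\Diff^+(W_g)$ is finitely generated \cite{kreckisotopy} and $F(W_g)$ is finite-dimensional, the image of $\pi_0\Diff^+(W_g)\to\GL(F(W_g))$ is a finitely generated linear group, hence residually finite by Mal'cev \cite{Malcev}, forcing $F(\psi'')=\mathrm{Id}$ and hence $F(M\#\Sigma)=F(M)$. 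This is where the parallelisable filling, the condition $g\ge 2$, and the finite-dimensionality of the target values actually enter --- none of which your surgery-and-zig-zag rewriting of the filling $W$ captures. If you want to keep your reduction, prove $F(\Sigma^\circ)=F(D^{4k-1})$ by running this implantation argument inside the disc bordism $D^{4k-1}$, not by manipulating the handles of $W$.
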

%%%%%%%%%%%%%%%%%%%%%%%%%%%%%%%%%%%%%%%%%%%%%%%%%
In particular, taking $4k-1=7$, $M = S^7 : \emptyset \leadsto \emptyset$ and $\Sigma$ to be the
Milnor 7-sphere, we see that such a TQFT satisfies $F(\Sigma) =F(S^7) \in \Bbbk$, giving the statement in the title of the paper.

Before describing the various generalisations explicitly, we
give a proof, based on an idea recently
exploited by Krannich, Kupers, and Mezher \cite{KKM}.

\begin{proof}
The statement is trivial for $4k-1=3$ as by the Poincar{\'e} conjecture the only homotopy sphere in this dimension is $S^3$, and connect-sum with this has no effect. So we may suppose that $4k-1 \geq 7$, so that we are squarely in the realm of high-dimensional manifold theory.

Choose any disc $D^{4k-1} \subset \mathrm{int}(M)$. The handlebody $V_g := \natural_g S^{2k-1} \times D^{2k}$ embeds into
$D^{4k-1}$ and thereby embeds into the interior of the bordism $M$. Given such an embedding, let
$M'$ be the complement of the interior of $V_g$. Then the bordism
$M : P \leadsto Q$ can be factored as the composition
$$M : P \overset{M'} \leadsto Q \bigsqcup \partial V_g \overset{\mathrm{Id} \sqcup V_g}\leadsto Q.$$
If $\varphi$ is an
oriented diffeomorphism of $\partial V_g$, then we may instead
glue $V_g$ to $M'$ along $\varphi$, giving a bordism 
$$M' \cup_{\varphi} V_g : P \overset{M'} \leadsto Q \bigsqcup \partial
V_g \overset{\mathrm{Id} \sqcup \varphi}\leadsto Q \bigsqcup \partial
V_g  \overset{\mathrm{Id} \sqcup V_g}\leadsto Q.$$
The strategy will be to show that for $g \geq 2$ the diffeomorphism $\varphi$ of $\partial V_g$ can be chosen so that
\begin{enumerate}[(a)]
 \item there is an oriented diffeomorphism $M' \cup_{\varphi} V_g \cong M \# \Sigma$ relative to $P$ and $Q$, and
 
 \item $F(\varphi) = \mathrm{Id} : F(\partial V_g) \to F(\partial V_g)$.
 
 \end{enumerate}
Given these, functoriality and monoidality of $F$ imply that $F(M \# \Sigma) = F(M)$.

The group $\Theta_{d}$ of $d$-dimensional oriented homotopy spheres
may
be identified with $\pi_0\mathrm{Diff}_\partial(D^{d-1})$, via the
following construction. A diffeomorphism $\psi$ of $D^{d-1}$ which is the identity near the boundary extends
to a oriented diffeomorphism $\psi'$ of $S^{d-1}$, which may be used to glue
two copies of the $d$-disc together to form an oriented exotic sphere $\Sigma_{\psi} :=
D^{d} \cup_{\psi'} D^{d}$. This defines a function
$\pi_0\mathrm{Diff}_\partial(D^{d-1}) \to \Theta_{d}$ which using Smale's $h$-cobordism theorem and Cerf's pseudoisotopy-implies-isotopy theorem can be checked to be an isomorphism as long as $d\geq 6$. 

Similarly, the
diffeomorphism $\psi$ extends to an oriented diffeomorphism
$\psi''$ of the manifold $W_g := \partial V_g = \#_g S^{2k-1} \times
S^{2k-1}$, which can be used to form the oriented cobordism $M' \cup_{\psi''} V_g$. We now explain how to identify this manifold as the connected-sum $M \# \Sigma_\psi$. Attaching $2k$-handles along the $g$ disjoint copies of $D^{2k-1} \times S^{2k-1}$ inside $\partial V_g \setminus \mathrm{int}(D^{4k-2})$ gives a cobordism $U : \partial V_g \leadsto S^{4k-2}$, and $V_g = U \cup_\mathrm{Id} D^{4k-1}$. The diffeomorphism $\psi''$ is the identity on $\partial V_g \setminus \mathrm{int}(D^{4k-2})$, so it extends to a diffeomorphism of $U$ which is $\psi''$ on the incoming boundary $\partial V_g$ and $\psi'$ on the outgoing boundary $S^{4k-2}$. This induces an oriented diffeomorphism
$$M' \cup_{\psi''} V_g = M' \cup_{\psi''} (U \cup_\mathrm{Id} D^{4k-1}) \cong (M' \cup_\mathrm{Id} U ) \cup_{\psi'} D^{4k-1} = M \# \Sigma_{\psi},$$
giving (a).

For (b) we use the following from \cite[Theorem 2.3]{KKM}, which systematises \cite{KRW}: If $[\Sigma_\psi] \in \Theta_{4k-1}$ is a homotopy sphere which bounds a parallelisable $4k$-dimensional manifold, and $g \geq 2$, then the corresponding diffeomorphism
$$[\psi''] \in \pi_0 \mathrm{Diff}^+(W_g)$$
is in the finite residual of this group,  i.e.\ is contained in every finite index subgroup. We are free to take $g$ as large as we like, so may assume that this applies.

We will also use that the group $\pi_0 \mathrm{Diff}^+(W_g)$ is
finitely-generated, which follows from \cite[Theorem 2]{kreckisotopy}.

To establish (b) we consider the homomorphism
$$F : \pi_0 \mathrm{Diff}^+(W_g) \lra \mathrm{GL}(F(W_g))$$
induced by $F$. Its image is finitely-generated (as $\pi_0
\mathrm{Diff}^+(W_g)$ is) and is a linear group (tautologically, as the vector space
$F(W_g)$ is dualisable and hence finite-dimensional), so by a theorem
of Mal'cev \cite{Malcev} its image is residually finite. As $[\psi'']$ is in
the finite residual of $\pi_0 \mathrm{Diff}^+(W_g)$, it is therefore
in the kernel of this homomorphism, i.e.\ $F(\psi'') = \mathrm{Id}$, giving (b).
\end{proof}

Now we consider the ways in which the Theorem can be
generalised,
considering the directions (iii-v)  in turn.
%%%%%%%%%%%%%%%%%%%%%%%%%%%%%%%%%%%%%%%%%%%%
\section*{Other target categories}
%%%%%%%%%%%%%%%%%%%%%%%%%%%%%%%%%%%%%%%%%%%%
Clearly it is not particularly important in the proof of Theorem \ref{thm:nofrills}
that the target of the TQFT is the category of vector spaces over a
field $\Bbbk$. As we
will now show, the 
Theorem remains true for a much larger class of
target symmetric monoidal categories, including obvious generalisations from algebra (e.g.\
modules over an arbitrary ring or graded versions thereof),
homological algebra  
(e.g.\ chain complexes of vector spaces), and algebraic
geometry (e.g.\ quasicoherent sheaves on schemes). 
Examples of obvious relevance for physics are the category of
super vector spaces (which allow us to encode fermions in physics) or
the derived category of chain complexes of vector spaces (which are
relevant for cohomological TQFTs).

To give a general statement, we need to introduce a few
elementary notions regarding groups and categories. To wit, recall that a group is \emph{residually finite} if 
any two distinct elements have distinct images under some homomorphism
to a finite group and that a group is \emph{locally residually finite}
if every finitely-generated subgroup is residually finite. We say an
object in a category is \emph{round} if its group of automorphisms is
locally residually finite and define a \emph{well-rounded} category to
be a symmetric monoidal category in which every dualisable object is
round.

The conclusion of Theorem \ref{thm:nofrills} continues to hold if we replace the target of the TQFT by any
well-rounded category. 

The restriction on a symmetric monoidal category of being well-rounded is a weak one because the class of locally residually
finite groups is large. For example, any finite group is residually finite (via the
identity homomorphism), as is the group of integers (because the
homomorphism $\bZ \to \bZ/m$ distinguishes every integer with
absolute value less than $m$ from zero ). The property of being
residually finite is easily seen to be preserved under taking
subgroups and products, as is the property of being locally residually finite.
It follows that any limit of residually finite (resp. locally
residually finite) groups in the category of groups is residually finite (resp. locally
residually finite), since an equaliser of a pair of maps is a
subgroup of the source.
So in fact, every finitely-generated
abelian group is residually finite and every 
abelian group is locally residually finite. It turns out that
profinite groups, free groups, solvable groups, and nilpotent groups
are all residually finite and we already saw in the proof of Theorem
\ref{thm:nofrills} that every linear group 
(i.e.\ a group isomorphic to a subgroup of $GL_n(\Bbbk)$
for some $n$ and some field $\Bbbk$) is locally residually finite.

So
the category of vector spaces over any field $\Bbbk$ is well-rounded,
because the dualisable objects are precisely the finite-dimensional
vector spaces. More generally, we have the following.
\begin{theorem}\label{thm:round}
  The following are well-rounded categories.
\begin{enumerate}
    \item The category of modules over a ring (or equivalently, the
      category of quasicoherent sheaves on an affine scheme).
    \item The category of graded modules over a graded ring.
  \item The derived category of vector spaces over a field.
    \item The category of quasicoherent sheaves on a scheme.
      \end{enumerate}
  \end{theorem}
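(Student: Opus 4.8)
The plan is to deduce all four cases from a single principle: if $\mathcal{C}$ is a symmetric monoidal category in which the automorphism group of every dualisable object embeds into a (possibly infinite) product $\prod_\alpha \GL_{n_\alpha}(R_\alpha)$ of general linear groups over commutative rings, then $\mathcal{C}$ is well-rounded. This principle rests on a single non-formal input: $\GL_n(R)$ is locally residually finite for every commutative ring $R$, which is the extension to commutative rings of the theorem of Mal'cev \cite{Malcev} used in the proof of Theorem~\ref{thm:nofrills}. To see it, note that a finitely generated subgroup $G \leq \GL_n(R)$ has all the matrix entries of its finitely many generators and their inverses inside some finitely generated $\bZ$-subalgebra $R_0 \subseteq R$, so that $G \leq \GL_n(R_0)$; and $\GL_n(R_0)$ is residually finite because a finitely generated $\bZ$-algebra is residually finite as a ring (for a nonzero non-nilpotent element via the weak Nullstellensatz that maximal ideals of such rings have finite residue field, the nilpotent case being an elementary supplement), so that any $h \neq \mathrm{id}$ --- which has a nonzero entry in $h - \mathrm{id}$ --- survives in a finite quotient ring. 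Granting this, the principle is formal: since residual finiteness passes to subgroups and to arbitrary products (as recalled before the theorem), a finitely generated subgroup of $\prod_\alpha \GL_{n_\alpha}(R_\alpha)$ projects to finitely generated, hence residually finite, subgroups of the factors and so embeds into a product of residually finite groups.

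Then, case by case, I would identify the dualisable objects and place their automorphism groups into such a product. In the category of modules over a commutative ring $R$ the dualisable objects are the finitely generated projective modules $P$; choosing $Q$ with $P \oplus Q \cong R^n$ exhibits $\Aut_R(P)$ as a subgroup of $\Aut_R(R^n) = \GL_n(R)$, which gives (1), and also the affine case of (4) since $R$-modules are the quasicoherent sheaves on $\mathrm{Spec}(R)$. In (2), the dualisable graded modules over a graded ring $R$ are the finitely generated projective ones, i.e.\ the retracts of finite sums $\bigoplus_i R(d_i)$ of shifts of $R$; a grading-preserving automorphism of such a free graded module is a matrix that is invertible over the underlying ring of $R$ (commutative under the standard conventions, or else module-finite over its commutative degree-zero part, to which the same reduction applies), so $\Aut(P)$ again embeds into a $\GL_n$. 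In (3), over a field $\Bbbk$ every object of the derived category is isomorphic to its homology, the dualisable objects are the perfect complexes --- those with bounded, finite-dimensional total homology $H_*$ --- and for such $V$ the endomorphism ring is $\prod_i \End_\Bbbk(H_i V)$, so $\Aut(V) = \prod_i \GL_\Bbbk(H_i V)$ is a finite product embedding block-diagonally into $\GL_N(\Bbbk)$ with $N = \dim_\Bbbk H_* V$. In (4), the dualisable quasicoherent sheaves on a scheme $X$ are the finite locally free sheaves $\mathcal{F}$ (check locally: on an affine open these are the finitely generated projective modules, and conversely a finite locally free sheaf is dualisable with dual $\underline{\mathrm{Hom}}(\mathcal{F}, \mathcal{O}_X)$); choosing an affine open cover $\{U_\alpha = \mathrm{Spec}(R_\alpha)\}$ and using that $\underline{\End}(\mathcal{F})$ is a sheaf yields an injection $\Aut(\mathcal{F}) \hookrightarrow \prod_\alpha \Aut_{R_\alpha}(\mathcal{F}|_{U_\alpha}) \hookrightarrow \prod_\alpha \GL_{n_\alpha}(R_\alpha)$.

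The hard part will be the commutative-algebra ingredient of the first paragraph: that $\GL_n$ of an arbitrary commutative ring is locally residually finite. Passing to a finitely generated $\bZ$-subalgebra is immediate, but one then needs that such rings are residually finite, which goes slightly beyond Mal'cev's original field case because of nilpotents and zero divisors; this is classical but is the one point where genuine content enters. Everything else is formal: cases (1) and (3) fall out at once, and (2) and (4) require only the standard identifications of the dualisable objects (finitely generated projective graded modules; finite locally free sheaves) together with the bookkeeping of descending to an affine open cover and invoking the stability of (local) residual finiteness under subgroups and products.
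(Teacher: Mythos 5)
Your proposal is correct in substance, but it routes around the paper's key citations rather than reproducing them. The paper identifies the dualisable objects in each case just as you do, but then quotes a stronger black box for the algebraic input: the automorphism group of \emph{any} finitely generated module over a ring is locally residually finite (Wehrfritz), which handles (1) at once, handles (2) by viewing a summand of a finite sum of shifts as a finitely generated module over the underlying ungraded ring, handles (3) by citing the equivalence of the derived category of $\Bbbk$ with graded $\Bbbk$-vector spaces, and handles (4) by the same sheaf-theoretic embedding of $H^0(X;\End(E))^\times$ into a product over an affine cover that you use. You instead reduce everything to the single statement that $\GL_n(R)$ is locally residually finite for $R$ commutative, proved by hand (entries of a finite generating set lie in a finitely generated $\bZ$-subalgebra, which is residually finite as a ring), and you reach $\GL_n$ via the projective-complement trick in (1), (2) and (4), and via formality of complexes over a field in (3). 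This buys a more self-contained and elementary argument, using only dualisable (hence projective) objects where the paper invokes a result about all finitely generated modules; the cost is that it only covers situations that genuinely embed into $\GL_n$ of a commutative ring. Two blemishes to fix, neither fatal: first, the ``elementary supplement'' for nilpotent elements is doing real work --- residual finiteness of a finitely generated commutative ring for a general nonzero $r$ needs, e.g., a maximal ideal $\mathfrak{m} \supseteq \mathrm{Ann}(r)$, the Krull intersection theorem to get $r \notin \mathfrak{m}^k$, and finiteness of $R_0/\mathfrak{m}^k$ from finiteness of the residue field; you should write this out or cite it rather than wave at it. Second, your parenthetical fallback in (2) is false as stated: a graded ring is in general not module-finite over its degree-zero part (already $\Bbbk[x]$ with $x$ in positive degree), so if one wants to allow graded-commutative rings with odd elements (noncommutative underlying ring) your $\GL_n$ principle does not apply and one needs the paper's stronger module-theoretic input; your main line is fine under the reading that the graded ring is commutative, which is also what the paper's own reduction to case (1) implicitly requires.
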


  \begin{proof}
    We proceed item-by-item.
    \begin{enumerate}
      \item The dualisable objects are the finitely-generated projective
        modules \cite[Example 1.4]{DoldPuppe} and any finitely-generated module is round
        \cite[6.2]{wehrfritz}.
        \item A dualisable object is a
summand of a finite sum of shifts of
$R$ and its
automorphisms are a subgroup of its automorphisms considered as a 
finitely-generated module over the ring obtained by
forgetting the grading. Hence they form a locally residually finite
group by (1).
  \item The derived category of vector spaces is
    equivalent to the category of graded vector spaces
    \cite{Keller}, so is well-rounded by (2). 
     \item For any scheme $X$, the dualisable objects of $\cat{QCoh}(X)$
       are the locally-free sheaves of finite type, or equivalently
       vector bundles on $X$. Given such a vector bundle $E$, let
       $\End(E)$ denote the corresponding bundle of endomorphisms and let
       $\{X_\alpha\}$ be a cover of $X$ by affine schemes.  The restriction map
    $H^0(X ; \End(E)) \to \prod_\alpha H^0\big(X_\alpha ; \End(E\vert_{X_\alpha})\big)$
is injective (by the sheaf property) and this gives an injective map on units
    $H^0(X ; \End(E))^\times \to \prod_\alpha H^0\big(X_\alpha ; \End(E\vert_{X_\alpha})\big)^\times.$
But if $X_\alpha = \mathrm{Spec}(A_\alpha)$ then the vector bundle $E\vert_{X_\alpha}$ corresponds to a finitely-generated projective $A_\alpha$-module $M_\alpha$ and so 
    $H^0\big(X_\alpha ; \End(E\vert_{X_\alpha})\big)^\times = \Aut_{A_\alpha}(M_\alpha),$
which is locally residually finite by (1). Thus $H^0(X ;
\End(E))^\times$ embeds into a product of locally residually finite
groups so is itself locally residually finite.\qedhere
    \end{enumerate}
  \end{proof}

Clearly we can detect the Milnor
sphere (and all exotic manifolds) in principle if we are given arbitrary freedom in
the choice of target category, since we can take our TQFT to be the identity functor. The
question then arises of what kind of target
category might allow us to re-prove the existence of the Milnor sphere.
%%%%%%%%%%%%%%%%%%%%%%%%%%%%%%%%%%%%%%%%%%%%
\section*{Other tangential structures}
%%%%%%%%%%%%%%%%%%%%%%%%%%%%%%%%%%%%%%%%%%%%
Theorem \ref{thm:nofrills} continues to hold when
the bordism category has objects and
  morphisms equipped with arbitrary tangential structure. Let $\theta : B \to B\OO(4k-1)$ be a tangential structure, classifying a vector bundle $\theta^*\gamma_{4k-1} \to B$, and let
$\cat{Bord}^{\theta}_{4k-1}$ be its associated bordism category.
It suffices to see that if $(M,\ell_M) : (P, \ell_P) \leadsto
(Q,\ell_Q)$ is a $\theta$-bordism, $V_g \subseteq D^{4k-1} \subseteq
M$ is an embedding, and $\ell$ is the $\theta$-structure on $W_g =
\partial V_g = \#^g S^{2k-1} \times S^{2k-1}$ obtained by restricting
$\ell_M$, then the $\theta$-structure mapping class
group $$\Gamma^\theta(W_g,\ell) := \pi_1\big(\mathrm{Bun}(TW_g \oplus \bR,
\theta^*\gamma_{4k-1}) /\!\!/ \mathrm{Diff}(W_g) , \ell\big)$$
has the following properties:
\begin{enumerate}[(a)]

\item there is an element $[\psi''_\theta] \in
  \Gamma^\theta(W_g,\ell)$ which on neglecting $\theta$-structures maps to $[\psi''] \in
  \pi_0\mathrm{Diff}(W_g)$, and

\item $[\psi''_\theta]$ is in the finite residual of
  $\Gamma^\theta(W_g,\ell)$ for large enough $g$.

\end{enumerate}
By the same argument as the proof of Theorem \ref{thm:nofrills}, we would
then have the following general theorem.
\begin{theorem} \label{thm:frills}
Let $\Sigma$ be a $(4k-1)$-dimensional homotopy sphere that
bounds a parallelisable $4k$-dimensional manifold and let $F :
\cat{Bord}^{\theta}_{4k-1} \to \cat{C}$ be a TQFT with
$\theta$-structure taking values in a well-rounded category $\cat{C}$.
Then for
any nonempty $(4k- 1)$-dimensional $\theta$-bordism $(M, \ell_M) : (P, \ell_P) \leadsto
(Q, \ell_Q)$,
$$F(M \# \Sigma, \ell) = F(M, \ell_M) : F(P, \ell_P) \lra F(Q, \ell_Q)$$
for any choice of connected-sum, where the ${\theta}$-structure $\ell$ on $M \# \Sigma$ depends on the choice of lift $[\psi''_\theta]$ in (a) above but agrees with $\ell_M\vert_{M \setminus \mathrm{int}(D^{4k-1})}$ on $M \setminus \mathrm{int}(D^{4k-1})$.
\end{theorem}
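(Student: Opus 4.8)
The plan is to reduce Theorem~\ref{thm:frills} to the two bullet points (a) and (b) as indicated in the excerpt, since once those are established the argument is word-for-word the proof of Theorem~\ref{thm:nofrills}: the $\theta$-bordism $(M,\ell_M)$ factors through the handlebody $V_g$ as before, regluing along a $\theta$-structured mapping class $[\psi''_\theta]$ produces a new $\theta$-bordism whose underlying manifold is $M \# \Sigma_\psi$ by the same handle-trading identity $V_g = U \cup_{\mathrm{Id}} D^{4k-1}$, and property (b) together with the Mal'cev/residual-finiteness argument applied to the homomorphism $F : \Gamma^\theta(W_g,\ell) \to \Aut_{\cat{C}}(F(W_g,\ell))$ (whose target is a finitely-generated linear group, hence residually finite, because $F(W_g,\ell)$ is dualisable in a well-rounded category) forces $F(\psi''_\theta) = \mathrm{Id}$. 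So the real content is proving (a) and (b).

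For (a), I would first record the Serre fibration
$$\mathrm{Bun}(TW_g \oplus \bR, \theta^*\gamma_{4k-1}) /\!\!/ \mathrm{Diff}(W_g) \lra B\mathrm{Diff}(W_g),$$
whose fibre is the space $\mathrm{Bun}(TW_g \oplus \bR, \theta^*\gamma_{4k-1})$ of $\theta$-structures. Taking $\pi_1$ based at $\ell$ gives an exact sequence
$$\pi_1\big(\mathrm{Bun}(TW_g\oplus\bR,\theta^*\gamma_{4k-1}),\ell\big) \lra \Gamma^\theta(W_g,\ell) \lra \pi_0\mathrm{Diff}(W_g) \lra \pi_0\mathrm{Bun}(\cdots),$$
so a class $[\psi''] \in \pi_0\mathrm{Diff}(W_g)$ lifts to $\Gamma^\theta(W_g,\ell)$ precisely when $\psi''$ fixes the path-component of $\ell$ in the space of $\theta$-structures. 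This holds here because $\psi''$ extends to a diffeomorphism of the \emph{disc-with-handles} description and in fact, as noted in the proof of Theorem~\ref{thm:nofrills}, $\psi''$ is isotopic to a diffeomorphism supported in a disc $D^{4k-2} \subseteq W_g$ (it is the identity on $W_g \setminus \mathrm{int}(D^{4k-2})$); any such "supported in a disc" diffeomorphism acts trivially on $\pi_0$ of the space of $\theta$-structures, since the $\theta$-structure and its pullback agree outside the disc and differ inside a contractible region by an element of $\pi_{4k-2}$ of the fibre of $\theta$, and — choosing $g$ and hence the ambient room large — one can absorb this into a connected family. Concretely I would build the lift by choosing a path of $\theta$-structures on the disc interpolating $\ell|_{D^{4k-2}}$ and $(\psi'')^*\ell|_{D^{4k-2}}$ rel boundary, which exists once we know the underlying clutching class vanishes; this is exactly the statement that $\Sigma_\psi$ bounds a parallelisable manifold, translated through the Pontryagin--Thom/obstruction dictionary. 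This is the step I expect to be the main obstacle: one must show the lift exists \emph{and} identify the resulting $\theta$-structure $\ell$ on $M \# \Sigma$ as one agreeing with $\ell_M$ away from the connect-sum disc, which requires care about basepoints and about the precise model of $\Gamma^\theta$.

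For (b), I would invoke \cite[Theorem 2.3]{KKM} in its $\theta$-structured form: that reference (systematising \cite{KRW}) shows the class $[\psi'']$ lies in the finite residual of $\pi_0\mathrm{Diff}^+(W_g)$ for $g \geq 2$, and I would push this forward along a section or, better, argue directly at the level of $\Gamma^\theta$. The cleanest route is: the forgetful map $\Gamma^\theta(W_g,\ell) \to \pi_0\mathrm{Diff}(W_g)$ has image a finite-index subgroup (the stabiliser of $[\ell]$, which is finite-index since the set of $\theta$-structures up to the mapping class group action — equivalently $\pi_0$ of the bordism category's relevant space — is finite when $g$ is large, by the finite generation results of \cite{kreckisotopy} and finiteness of the relevant homotopy groups after rationalisation), and its kernel is a quotient of the finitely-generated abelian group $\pi_1$ of the space of $\theta$-structures. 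A finite residual element maps to a finite residual element under any homomorphism with finitely-generated, residually-finite-by-(finitely-generated abelian) target — more simply, the preimage in $\Gamma^\theta$ of any finite-index subgroup of $\pi_0\mathrm{Diff}(W_g)$ is finite-index, and intersecting with finite-index subgroups coming from the abelian kernel still leaves $[\psi''_\theta]$ in every finite-index subgroup because its image downstairs is killed by all finite quotients and the kernel contribution is itself residually finite and can be separated. I would therefore first verify $\Gamma^\theta(W_g,\ell)$ is finitely generated (extension of a finitely-generated group by a finitely-generated abelian group) and then run the finite-residual chase; this part is routine group theory once (a) and the finite generation are in hand.
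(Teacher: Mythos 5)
Your overall skeleton (reduce to properties (a) and (b) for the structured mapping class group, then rerun the Mal'cev argument of Theorem~\ref{thm:nofrills}) matches the paper, but your treatment of (b) --- which is the real content of Theorem~\ref{thm:frills} --- has a genuine gap. Membership in the finite residual pushes \emph{forward} along homomorphisms (preimages of finite-index subgroups have finite index), but it does not pull back: knowing $[\psi'']$ lies in the finite residual of $\pi_0\Diff^+(W_g)$ (\cite[Theorem 2.3]{KKM}) says nothing formal about whether some lift $[\psi''_\theta]$ lies in the finite residual of $\Gamma^\theta(W_g,\ell)$, and there is no off-the-shelf ``$\theta$-structured form'' of that theorem to invoke --- producing one is precisely what the paper's appendix does. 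Concretely, after reducing to framings the relevant group $\Gamma^{\mathrm{sfr}}_\partial(W_{g,1},\ell_W)$ is a central extension of $\pi^s_{4k-1}\times\Sp^{q \text{ or } a}_{2g}(\bZ)$ by $\bZ=\mathrm{bP}^{\mathrm{sfr}}_{4k}$, and whether the central $\bZ$ (where the lift must be taken) lies in the finite residual depends on the Euler class of this extension: if the extension were split, $E\cong\bZ\times\Sp^{q}_{2g}(\bZ)$ would be residually finite and the finite residual of the stably framed mapping class group would be trivial, so \emph{no} lift of a nontrivial $[\psi'']$ would work. The paper therefore has to compute the abelianisation of this group via \cite[Corollary 1.8]{GRW} and \cite[Lemma 5.2]{GRWAb}, identify the extension class as $\pm\tfrac{\mathrm{sgn}}{8}$ modulo torsion, apply the Deligne-type non-residual-finiteness statements \cite[Lemmas 5.1, 5.2]{KKM}, and finally descend from stable framings to framings via Lemma~\ref{lem:AscendingFinRes}. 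None of this is ``routine group theory once finite generation is in hand''; your finite-index/abelian-kernel chase cannot substitute for it.

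Two further problems. First, your argument that the forgetful map $\Gamma^\theta(W_g,\ell)\to\pi_0\Diff(W_g)$ has finite-index image because the set of homotopy classes of $\theta$-structures is finite is false for general $\theta$ (the set of components of $\mathrm{Bun}(TW_g\oplus\bR,\theta^*\gamma_{4k-1})$ may be infinite), and your sketch of (a), absorbing the discrepancy on a disc ``choosing $g$ large'', is not an argument. The paper avoids both issues with Lemma~\ref{lem:destab}: since $\ell=\ell_M\vert_{W_g}$ extends over the disc $D^{4k-1}\subseteq M$, it is induced from a framing of $W_{g,1}$, giving a homomorphism $\Gamma^{\fr}_\partial(W_{g,1},\ell^{\fr})\to\Gamma^\theta(W_g,\ell)$ through which (a) and (b) are imported uniformly in $\theta$; property (a) itself ultimately rests on the surjectivity of $\mathrm{bP}^{\mathrm{sfr}}_{4k}\to\mathrm{bP}_{4k}$, not on an obstruction-theory vanishing you left unproved. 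Second, a smaller slip: in a well-rounded category $\Aut_{\cat{C}}(F(W_g,\ell))$ is locally residually finite, not linear, so one still needs the relevant image to be finitely generated; routing the argument through the (finitely generated) framed mapping class group, as the paper does, supplies this, whereas $\Gamma^\theta(W_g,\ell)$ itself need not be finitely generated for arbitrary $\theta$.
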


Note that we do not require in Theorem~\ref{thm:frills} that the homotopy sphere
$\Sigma$ itself be equipped with, nor even admit, a $\theta$-structure: it is a consequence of the proof that $M \# \Sigma$ admits a $\theta$-structure when $M$ does. But this is not surprising, as it follows from the well-known \cite{EbertMO} fact that the tangent bundles of $\Sigma$ and $S^{4k-1}$ correspond under the homotopy equivalence (or homeomorphism) between them, so the tangent bundles of $M \# \Sigma$ and $M$ also correspond under the homotopy equivalence (or homeomorphism) between them.

Of course $M \# \Sigma$ may also admit other $\theta$-structures $\ell'$ such that $F(M \# \Sigma,\ell') \neq F(M, \ell_M)$, but this is hardly related to whether $F$ can detect the difference between the underlying manifolds $M \# \Sigma$ and $M$. One way to see this is to observe that $M$ can admit another $\theta$-structure $\ell'_M$ such that $F(M,\ell'_M) \neq F(M, \ell_M)$ but of course the underlying manifolds are equal\footnote{A simple concrete example is the $(3+1)$-dimensional invertible TQFT corresponding to the signature: changing orientations negates the signature.}. Indeed a TQFT defined on $\theta$-manifolds detects something about the underlying manifold but also something about the $\theta$-structure, and the formulation in Theorem \ref{thm:frills} should be read as saying that a TQFT cannot tell the difference between the underlying manifolds $M \# \Sigma$ and $M$.

We now explain how to reduce the verification of properties (a) and
(b) to a better-studied situation, namely the framed mapping class
group of the manifold with boundary $W_{g,1} := W_g \setminus
\mathrm{int}(D^{4k-2})$. In the Appendix we will verify these
properties in this case, which may be of independent interest, and this concludes the proof of Theorem \ref{thm:frills}.

\begin{lemma}\label{lem:destab}
There is a framing ${\ell}^\mathrm{fr} : TW_{g,1} \to \bR^{4k-2}$
of the manifold with boundary $W_{g,1}$ such that the bundle map
$$ TW_{g,1} \oplus \bR \overset{{\ell}^\mathrm{fr} \oplus \bR}\lra \bR^{4k-1} \overset{\text{fibre inclusion}}\lra \theta^*\gamma_{4k-1}$$
is homotopic to the restricted $\theta$-structure $\ell\vert_{W_{g,1}}$.
\end{lemma}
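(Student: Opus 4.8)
The plan is to reduce the statement to two facts: (1) that $W_{g,1} = \natural_g (S^{2k-1} \times S^{2k-1} \setminus \inter(D^{4k-2}))$ is stably parallelisable, and in fact parallelisable as a manifold with boundary since it is an open subset of $\bR^{4k-2}$ up to homotopy — more precisely it has the homotopy type of a wedge of $(2k-1)$-spheres, so its tangent bundle is trivial once it is stably trivial; and (2) that any two stabilisations of framings that induce the same $\theta$-structure after one further stabilisation can be compared by an obstruction lying in a group that vanishes in the relevant range. The key point is that $\theta$-structures on $W_{g,1}$ that are built from a framing, composed with a fixed lift $\bR^{4k-1} \to \theta^*\gamma_{4k-1}$ of the universal bundle over $B$, are classified by homotopy classes of maps $W_{g,1} \to \OO(4k-1)$ once we fix one such $\theta$-structure as basepoint.

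First I would observe that $V_g = \natural_g S^{2k-1} \times D^{2k}$ deformation retracts onto a wedge of $g$ copies of $S^{2k-1}$, and $W_{g,1}$ likewise has the homotopy type of $\bigvee_{2g} S^{2k-1}$ (it is obtained from $\natural_g S^{2k-1}\times D^{2k}$ by a further handle attachment, or directly: $W_g = \#_g S^{2k-1}\times S^{2k-1}$ has a CW structure with one cell in each dimension $0, 2k-1, 2k-1, \dots, 4k-2$, and removing a top disc kills the top cell). Second, $W_{g,1}$ embeds in the interior of $M$ inside a disc $D^{4k-1}$, hence embeds in $\bR^{4k-1}$ with trivial normal bundle, so $TW_{g,1}\oplus\bR$ is trivial; since $W_{g,1}$ is homotopy equivalent to a CW complex of dimension $2k-1 \le 4k-3 < 4k-2$, stable triviality of $TW_{g,1}$ implies actual triviality, so a framing $\ell^\mathrm{fr}$ exists. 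Third, I would compare the $\theta$-structure $\ell\vert_{W_{g,1}}$ with the one induced from $\ell^\mathrm{fr}$ via the stated bundle map: the difference is classified by a homotopy class of map $W_{g,1} \to \Omega^{?}$... rather, both are sections of a bundle with fibre homotopy equivalent to the homotopy fibre of $\theta$, and over $W_{g,1}$ (of homotopy dimension $2k-1$) the obstructions to homotoping one to the other lie in $H^i(W_{g,1}; \pi_i(\mathrm{hofib}(\theta)))$, concentrated in degrees $i \in \{0, 2k-1\}$. I would then modify the choice of framing $\ell^\mathrm{fr}$ by an element of $[W_{g,1}, \OO(4k-1)] \cong \pi_{2k-1}(\OO(4k-1))^{\oplus 2g}$ (plus a $\pi_0$ term that is irrelevant since we only need a homotopy, and $W_{g,1}$ is connected) to kill the $(2k-1)$-dimensional obstruction; the relevant map $\pi_{2k-1}(\OO(4k-1)) \to \pi_{2k-1}(\mathrm{hofib}(\theta))$... is precisely the connecting-type map that we need to hit the class, and here one uses that $\OO(4k-1) \to B$ — no: one uses that changing the framing by $\alpha: W_{g,1}\to \OO(4k-1)$ changes the induced $\theta$-structure by the composite $W_{g,1}\xrightarrow{\alpha}\OO(4k-1)\to \mathrm{(fibre\ of\ }\theta)$, and this composite realises every class in $H^{2k-1}(W_{g,1};\pi_{2k-1})$ because $\OO(4k-1) \to \OO/\OO(4k-1) \simeq$ (something highly connected) means $\pi_{2k-1}\OO(4k-1)\to\pi_{2k-1}\OO$ is surjective, and the fibre of $\theta$ receives $\pi_{2k-1}\OO$ compatibly.

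The main obstacle I expect is the bookkeeping in this last step: pinning down exactly which bundle $\ell\vert_{W_{g,1}}$ and the framing-induced map are sections of, and checking that the single nonzero obstruction group $H^{2k-1}(W_{g,1};\pi_{2k-1}(\mathrm{hofib}(\theta)))$ is hit by the re-framing action. This is essentially an obstruction-theory computation over a space of homotopy dimension $2k-1$, using that $W_{g,1}$ retracts to a wedge of $(2k-1)$-spheres so that only $H^0$ and $H^{2k-1}$ are nonzero, and that the $H^0$-part is automatically matched because both structures restrict to the same $\theta$-structure near a point (or can be arranged to by an initial adjustment). Once the obstruction class is seen to lie in the image of the re-framing action — which follows from surjectivity of $\pi_{2k-1}(\OO(4k-1)) \to \pi_{2k-1}(\OO)$ in the stable range $2k-1 \le 4k-3$ — the lemma follows. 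I would present this as: choose any framing, compute the single obstruction, adjust the framing to cancel it.
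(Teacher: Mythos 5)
There is a genuine gap, and it sits exactly where you wave your hands: the claim that the degree-$(2k-1)$ difference obstruction can always be cancelled by re-choosing the framing. Changing the framing by $\alpha\colon W_{g,1}\to \OO(4k-2)$ (note: $\OO(4k-2)$, not $\OO(4k-1)$, though that slip is harmless in this range) changes the induced $\theta$-structure only by classes in the image of $\pi_{2k-1}(\OO(4k-1))\cong\pi_{2k}(B\OO(4k-1))\to \pi_{2k-1}(\hofib(\theta))$, i.e.\ in the kernel of $\pi_{2k-1}(\hofib(\theta))\to\pi_{2k-1}(B)$. There is no map ``$\OO\to\hofib(\theta)$'' for a general tangential structure, so your assertion that the fibre ``receives $\pi_{2k-1}\OO$ compatibly'' and that every obstruction class is hit is false. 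Indeed, the statement your argument would prove — that \emph{any} $\theta$-structure on $W_{g,1}$ is induced by a framing — is wrong: take $\theta\colon B=S^{2k-1}\times B\OO(4k-1)\to B\OO(4k-1)$, so that a $\theta$-structure contains a map $W_{g,1}\to S^{2k-1}$; a structure whose map is essential cannot be homotopic to a framing-induced one (whose underlying map to $B$ is constant), and no reframing changes this. Your use of the disc only to get stable parallelisability of $W_{g,1}$ (which holds anyway) throws away the one hypothesis that makes the lemma true.

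The missing idea is the paper's first step: since $\ell=\ell_M\vert_{W_g}$ and $W_g\subset D^{4k-1}\subset M$ with $D^{4k-1}$ contractible, the structure $\ell_M\vert_{D^{4k-1}}$ is homotopic through bundle maps to one factoring through the fibre inclusion $\bR^{4k-1}\to\theta^*\gamma_{4k-1}$, so $\ell$ itself is induced by a once-stable framing $\ell^{1\text{-}\mathrm{fr}}\colon TW_g\oplus\bR\to\bR^{4k-1}$ composed with the fixed fibre inclusion. After that, $\theta$ plays no further role and the problem becomes pure destabilisation: homotope $\ell^{1\text{-}\mathrm{fr}}\vert_{W_{g,1}}$ to a map of the form $\ell^{\mathrm{fr}}\oplus\bR$, which is a lifting problem solvable because $W_{g,1}\simeq\bigvee_{2g}S^{2k-1}$ is a $(2k-1)$-complex and $B\OO(4k-2)\to B\OO(4k-1)$ is $(4k-3)$-connected (equivalently, $[W_{g,1},\OO(4k-2)]\to[W_{g,1},\OO(4k-1)]$ is onto). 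Your obstruction-theoretic framework and the cell-structure observations are fine and would complete correctly once this factorisation through the fibre is established first; without it, the comparison is being made in the wrong place and the key surjectivity fails.
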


Via this lemma there is a change-of-tangential-structure-then-glue-in-the-disc homomorphism
$$\Gamma_\partial^\mathrm{fr}(W_{g,1}, \ell^\mathrm{fr}) \lra \Gamma^\theta(W_g, \ell),$$
using which properties (a) and (b) for $\Gamma_\partial^\mathrm{fr}(W_{g,1}, \ell^\mathrm{fr})$ imply the same for $\Gamma^\theta(W_g, \ell)$.

\begin{proof}[Proof of Lemma \ref{lem:destab}]
The restriction $\ell$ of $\ell_M$ from $M$ to $W_g$ is the same as the restriction of $\ell_M\vert_{D^{4k-1}}$ to $W_g$, but as $D^{4k-1}$ is contractible the $\theta$-structure $\ell_M\vert_{D^{4k-1}}$ arises from a framing of $D^{4k-1}$, and hence the $\theta$-structure $\ell$ on $W_g$ arises from a (once-stable) framing of this manifold, i.e.\ a bundle map $\ell^{1\text{-}\mathrm{fr}} : TW_g \oplus \bR \to \bR^{4k-1}$. The restriction $\ell^{1\text{-}\mathrm{fr}}\vert_{W_{g,1}} : TW_{g,1}\oplus \bR \to \bR^{4k-1}$ can be homotoped to a bundle map of the form $\ell^\mathrm{fr} \oplus \bR$, as when formulated as a lifting problem this corresponds to lifting a nullhomotopy of the composition
$$W_{g,1} \overset{\tau_{W_{g,1}}}\lra B\OO(4k-2) \lra B\OO(4k-1)$$
to a nullhomotopy of $\tau_{W_{g,1}}$, and this is possible as
$W_{g,1}$ is homotopy equivalent to a $(2k-1)$-dimensional cell
complex and the map $B\OO(4k-2) \to B\OO(4k-1)$ is
$(4k-3)$-connected.
\end{proof}
%%%%%%%%%%%%%%%%%%%%%
\section*{Extended TQFTs}
%%%%%%%%%%%%%%%%%%%%%%%%%%%%%%%%%%%%%%%%%%%%
The locality of the known laws of physics suggests that TQFTs relevant
for physics should be fully extended downwards, in the sense of being defined on
manifolds of all dimensions up to the dimension of spacetime being
considered. This requires the use of $n$-categories. Moreover, the
fact that many of the explicit TQFTs encountered in physics are cohomological
in nature suggests that TQFTs should also be extended upwards, in the
sense of being defined not as functors between categories (or
$n$-categories), but between $(\infty,1)$-categories (or $(\infty,n)$-categories).

Our results can be applied in such cases as well. To see how, observe
that given any TQFT extended downwards, we can obtain an unextended
TQFT by restricting all manifolds of codimension greater than one to
be the empty set, to which the Theorems already derived can be
applied. Similarly, given a TQFT that is extended upwards, we can pass
to the corresponding homotopy categories, whose morphisms are
equivalence classes of 1-morphisms under the equivalence relation of
being connected by a 2-morphism. Assuming the homotopy category of the
target is well-rounded, our Theorems will apply.

In particular, consider cohomological TQFTs taking values in the derived $(\infty,1)$-category of a field $\Bbbk$, whose corresponding homotopy category is the classical derived category of vector spaces over $\Bbbk$. The latter is well-rounded by part (3) of Theorem~\ref{thm:round}, so the theorem in the title of this paper applies to 
such cohomological TQFTs as well.

\appendix
%%%%%%%%%%%%%%%%%%%%%%%%%%%%%%%%%%%%%%%%%%%%
\section*{Appendix: Finite residuals of (stably-) framed mapping class groups \label{app:mcg}}
%%%%%%%%%%%%%%%%%%%%%%%%%%%%%%%%%%%%%%%%%%%%
Our goal in this appendix is to outline the proof of the following statement, using the results and methods of \cite{GRWAb, K, KRWframings}. We will necessarily assume familiarity with these.

Let $\ell^\mathrm{fr}$ be a framing of $W_{g,1}$ and $\Gamma_\partial^\mathrm{fr}(W_{g,1} , \ell^\mathrm{fr})$ be the framed mapping class group.

\begin{theorem}
Let $[\psi] \in \pi_0\Diff(D^{4k-2})$  correspond to a homotopy sphere $[\Sigma_\psi]$ lying in the subgroup $\mathrm{bP}_{4k} \leq \Theta_{4k-1}$, and $[\psi''] \in \pi_0\Diff_\partial(W_{g,1})$ denote the implantation of this diffeomorphism into $W_{g,1}$. Then 

\begin{enumerate}[(a)]

\item there is an element $[\psi''_\theta] \in \Gamma_\partial^\mathrm{fr}(W_{g,1} , \ell^\mathrm{fr})$ which maps to $[\psi''] \in \pi_0\mathrm{Diff}_\partial(W_{g,1})$, and

\item $[\psi''_\theta]$ is in the finite residual of $\Gamma_\partial^\mathrm{fr}(W_{g,1} , \ell^\mathrm{fr})$ for large enough $g$.

\end{enumerate}
\end{theorem}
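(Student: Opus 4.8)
We may assume $k\ge2$ (for $k=1$ one has $\mathrm{bP}_4=0$ and there is nothing to prove). The plan is to study $\Gamma := \Gamma_\partial^\mathrm{fr}(W_{g,1},\ell^\mathrm{fr})$ via the exact sequence
$$\pi_1\mathrm{Bun}^\mathrm{fr}_\partial(W_{g,1}) \lra \Gamma \overset{\pi}\lra \pi_0\Diff_\partial(W_{g,1}) \overset{\partial}\lra \pi_0\mathrm{Bun}^\mathrm{fr}_\partial(W_{g,1})$$
arising from the fibration $\mathrm{Bun}^\mathrm{fr}_\partial(W_{g,1})/\!\!/\Diff_\partial(W_{g,1}) \to B\Diff_\partial(W_{g,1})$, where $\mathrm{Bun}^\mathrm{fr}_\partial(W_{g,1})$ is the space of framings of $W_{g,1}$ agreeing with $\ell^\mathrm{fr}$ near $\partial$; here $\pi_0\mathrm{Bun}^\mathrm{fr}_\partial(W_{g,1})$ is a torsor over $[W_{g,1}/\partial W_{g,1},\OO(4k-2)]$ and $\partial$ records the difference between a pulled-back framing and $\ell^\mathrm{fr}$. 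Thus (a) becomes the assertion that $\partial([\psi''])$ vanishes, and (b) the assertion that the resulting lift lies in the finite residual.

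For (a): since $[\psi'']$ is the implantation of $[\psi]$ into a disc $D\subset\mathrm{int}(W_{g,1})$ on which $\ell^\mathrm{fr}$ may be taken standard, $\psi''$ agrees with the identity away from $D$, so $\partial([\psi''])$ is induced by a single class $[d\psi]\in\pi_{4k-2}(\OO(4k-2))$ via collapse onto the top cell of $W_{g,1}/\partial W_{g,1}\simeq W_g$; in particular it depends only on $[\Sigma_\psi]\in\Theta_{4k-1}$. The content of (a) is that this obstruction vanishes whenever $[\Sigma_\psi]\in\mathrm{bP}_{4k}$; informally, a parallelisable coboundary of $\Sigma_\psi$ should supply precisely the nullhomotopy realising $\partial([\psi''])=0$, and this is where I would invoke the obstruction-theoretic methods of \cite{KRWframings}. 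Granting this, $[\psi'']$ lifts to some $[\psi''_\theta]\in\Gamma$; and by the argument showing that implanted mapping classes are central in $\pi_0\Diff_\partial(W_{g,1})$ (discs in $W_{g,1}$ being isotopic) I would arrange $[\psi''_\theta]$ to be central in $\Gamma$ and to generate a finite cyclic subgroup $Z\le\Gamma$ that is a quotient of $\mathrm{bP}_{4k}$.

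For (b): I would use the structural results of \cite{GRWAb, K, KRWframings} to present $\Gamma$, for $g$ large, as an extension $1\to\widetilde{\mathcal T}_g\to\Gamma\to\widetilde G_g\to1$ in which $\widetilde G_g$ is an arithmetic group --- a finite-index subgroup of $\Sp_{2g}(\bZ)$, or a small central extension of one, acting through $H_{2k-1}(W_{g,1})$ together with the framing data --- and $\widetilde{\mathcal T}_g$ is the framed Torelli group, whose low-degree homology stabilises and is computable. Since $[\psi''_\theta]$ fixes $\ell^\mathrm{fr}$ and acts trivially on homology it lies in $\widetilde{\mathcal T}_g$, and since $\widetilde G_g$ is finitely generated and residually finite (by a theorem of Mal'cev \cite{Malcev}) the finite residual of $\Gamma$ is contained in $\widetilde{\mathcal T}_g$. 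I would then argue, following \cite{KKM} (which proves the analogous statement for $\pi_0\Diff^+(W_g)$), that the central extension of $\widetilde G_g$ cutting out $Z$ is classified, in the stable range, by a nonzero multiple of the generator of the relevant second cohomology group; Deligne's non-residual-finiteness theorem and its analogues then apply once $\widetilde G_g$ has rank $\ge2$ --- i.e.\ for $g$ large --- forcing $Z$, and in particular $[\psi''_\theta]$, into every finite-index subgroup of $\Gamma$, which is (b).

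The step I expect to be the main obstacle is this last reduction in the presence of framings: one must check that the extra abelian contributions to $\Gamma$ coming from $\pi_1\mathrm{Bun}^\mathrm{fr}_\partial(W_{g,1})$ --- which, unlike $\mathrm{bP}_{4k}$, contain free summands such as $\pi_{4k-1}(\OO)\cong\bZ$ --- do not interfere, in particular that the lift in (a) can genuinely be chosen inside the finite central subgroup $Z$; and one must extract from \cite{GRWAb, K, KRWframings} the framed refinements of the homological computations needed to identify the relevant cohomology class. The remaining parts should then be a matter of assembling the cited results.
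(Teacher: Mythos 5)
The conclusion you want in (a) is (at least for large $g$) a consequence of the paper's theorem, but your proposed route to it has a genuine gap exactly where the work lies. Your strategy requires showing that the lifting obstruction $\partial([\psi''])$ --- the image of the derivative class $[d\psi]\in\pi_{4k-2}(\OO(4k-2))$ in the set of homotopy classes of framings rel boundary --- vanishes whenever $[\Sigma_\psi]\in\mathrm{bP}_{4k}$, and you offer only the informal remark that a parallelisable coboundary ``should supply the nullhomotopy''. That conflates stable and unstable data: a parallelisable coboundary controls stable framing information, whereas the obstruction here is purely unstable --- indeed $\pi_{4k-2}(\OO)=0$, so there is no stable obstruction at all, and the issue lives entirely in the failure of destabilisation, e.g.\ in $\pi_{4k-1}(\OO/\OO(4k-2))=\bZ/4$. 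The paper never proves this unstable vanishing directly. Instead it proves the whole statement first for the \emph{stably framed} mapping class group, where lifting is immediate because $\mathrm{bP}^\mathrm{sfr}_{4k}=\pi_{4k}(\G/\Top)=\bZ$ surjects onto $\mathrm{bP}_{4k}$ and implants centrally and injectively; there the finite residual is identified with this central $\bZ$ via the central extension $0\to\bZ\to\Gamma^{\mathrm{sfr}}_\partial(W_{g,1},\ell_W)\to\pi^s_{4k-1}\times\Sp^{q\text{ or }a}_{2g}(\bZ)\to0$, the abelianisation computation $\pi^s_{4k-1}(\Sigma^\infty\OO/\OO(4k-2)_+)=\pi^s_{4k-1}\oplus\bZ/4$ from \cite{GRW,GRWAb}, the identification of the Euler class as $\pm\tfrac{\mathrm{sgn}}{8}$ modulo torsion, and \cite[Lemmas 5.1, 5.2]{KKM}. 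Parts (a) and (b) for the framed group are then deduced \emph{simultaneously} from the comparison $\Gamma^\mathrm{fr}_\partial\to\Gamma^\mathrm{sfr}_\partial$, which has finite kernel and finite-index image because the fibre of framings over stable framings has $\pi_0=\bZ/4$ and finite $\pi_1$, together with Lemma \ref{lem:AscendingFinRes} (finite residuals surject along quotients with finite kernel).

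For (b), the step you yourself flag as ``the main obstacle'' is in fact the mathematical content: one must rule out finite quotients of $\Gamma^\mathrm{fr}_\partial$ detecting the implanted central subgroup through the extra abelian contributions from $\pi_1$ of the space of framings, and identify the relevant extension class; working with the unstable framed group directly, you have no stated access to the abelianisation computations that make this possible, which is precisely why the paper takes the stable detour. Two further slips: arguing that the finite residual of $\Gamma$ is \emph{contained} in the framed Torelli group is the wrong direction (you need to force $[\psi''_\theta]$ \emph{into} every finite-index subgroup, not constrain where the finite residual sits); and in the stably framed group the implanted central subgroup is the infinite cyclic $\mathrm{bP}^\mathrm{sfr}_{4k}=\bZ$, not a finite cyclic quotient of $\mathrm{bP}_{4k}$ --- it is exactly this integral central $\bZ$, with extension class a unit multiple of $\tfrac{\mathrm{sgn}}{8}$, to which the Deligne-type non-residual-finiteness mechanism of \cite{KKM} applies. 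So while your outline assembles the right ingredients (central implantation, symplectic quotient, $\tfrac{\mathrm{sgn}}{8}$, Deligne via \cite{KKM}, Mal'cev), both crucial steps are left unproved, and the paper's two-step route (stable first, then transfer by the finite-kernel lemma) is what makes them tractable.
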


We will first prove the analogous statement for the stably-framed mapping class group $\Gamma_\partial^\mathrm{sfr}(W_{g,1} , \ell^\mathrm{sfr})$, and then  deduce the statement for $\Gamma_\partial^\mathrm{fr}(W_{g,1} , \ell^\mathrm{fr})$ from this.

\subsection*{The stably-framed mapping class group}

Let $\Theta_{d}^\mathrm{sfr}$ denote the group of homotopy $d$-spheres
equipped with a choice of stable framing. Just as smoothing theory
identifies $\Theta_d = \pi_d(\Top/\OO)$ (for $d \neq 4$), it
identifies $\Theta_{d}^\mathrm{sfr} = \pi_{d}(\Top)$. The analogue of
the Kervaire--Milnor sequence in this setting is the long exact
sequence associated to the fibration sequence $\Top \to \G \to
\G/\Top$, which using Sullivan's calculation of $\pi_d(\G/\Top)$ (in terms of quadratic $L$-theory) gives in particular an exact sequence
$$0 \lra \mathrm{bP}_{4k}^\mathrm{sfr} := \pi_{4k}(\G/\Top) = \bZ \lra \Theta_{4k-1}^\mathrm{sfr} \lra \pi_{4k-1}^s \lra 0.$$
Neglecting stable framings gives a map to the Kervaire--Milnor sequence
$$0 \lra \mathrm{bP}_{4k}\lra \Theta_{4k-1} \lra \pi_{4k-1}^s/(\mathrm{Im}J) \lra \cdots,$$
and $\bZ= \mathrm{bP}_{4k}^\mathrm{sfr} \to \mathrm{bP}_{4k}$ is tautologically surjective.

Just as the mapping class group of $D^d$ may be identified with
$\Theta_{d+1}$, the group
$\Gamma^{\mathrm{sfr}}_\partial(D^{4k-2},\ell_D)$ may be identified
with $\Theta_{4k-1}^\mathrm{sfr}$. Extending stably-framed diffeomorphisms along $D^{4k-1} \subset W_{g,1}$ by the identity defines a homomorphism
$$\Theta_{4k-1}^\mathrm{sfr} = \Gamma^{\mathrm{sfr}}_\partial(D^{4k-2},\ell_D) \lra \Gamma^{\mathrm{sfr}}_\partial(W_{g,1},\ell_W),$$
which lands in the centre. (As any stably-framed diffeomorphism of $W_{g,1}$ which fixes the boundary can be isotoped to also fix $D^{4k-1} \subset W_{g,1}$.) It is also injective. (As re-gluing $V_g$ to $S^{4k-1} \setminus \mathrm{int}(V_g)$ along an element in the kernel gives $S^{4k-1}$ with its ordinary framing, but it also gives the element of $\Theta_{4k-1}^\mathrm{sfr}$ that one started with.)

% If $\ell_W$ is a stable framing induced along $W_g \subset V_g \subset S^{4k-1}$, which for our purposes it is, then re-gluing $V_g$ to $S^{4k-1} \setminus \mathrm{int}(V_g)$ along elements of the subgroup $Tor\Gamma^{\mathrm{sfr}}_\partial(W_{g,1},\ell_W)$ of those stably-framed diffeomorphisms which act as the identity on homology, defines a function $Tor\Gamma^{\mathrm{sfr}}_\partial(W_{g,1},\ell_W) \to \Theta_{4k-1}^\mathrm{sfr}$, showing that XXX is injective.

%Following XXX, the quotient $\Gamma^{\mathrm{sfr}}_\partial(W_{g,1},\ell_W)/\Theta_{4k-1}^\mathrm{sfr}$ may be identified with the stably framed mapping class group $\Gamma^{\mathrm{sfr}}_{\partial/2}(W_{g,1},\ell_W)$ in which diffeomorphisms (and stable framings) are only required to be fixed on half the boundary, i.e.\ on a $D^{4k-2} \subset S^{4k-2} = \partial W_{g,1}$.

\begin{lemma}\label{lem:Spq}
Acting on $H_{2k-1}(W_{g,1};\bZ)$ induces an isomorphism
$$\Gamma^{\mathrm{sfr}}_\partial(W_{g,1},\ell_W)/\Theta_{4k-1}^\mathrm{sfr} \overset{\sim}\lra
\begin{cases}
\Sp_{2g}^q(\bZ) & 2k-1 \neq 3, 7\\
\Sp_{2g}^{q \text{ or } a}(\bZ) & 2k-1 = 3, 7.
\end{cases}
%G_g^{\mathrm{sfr}} := \begin{cases}
%Sp_{2g}^{q \text{ or } a}(\bZ) & 2k-1=3, 7 \text{ and } \mathrm{Arf}(W, \ell_W) \text{ is 0 or 1}\\
%Sp_{2g}^q(\bZ) & \text{ else}.
%\end{cases}
$$
to a subgroup of $\Sp_{2g}(\bZ)$ consisting of those matrices which preserve a certain quadratic refinement (which has Arf invariant 0 if $2k-1 \neq 3, 7$ and may have either Arf invariant in the remaining cases).
\end{lemma}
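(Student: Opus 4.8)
The plan is to identify the framed mapping class group of $W_{g,1}$ via its action on the middle homology, using the well-understood theory of framed (or more generally $\theta$-structured) mapping class groups of highly-connected manifolds developed in \cite{GRWAb, K, KRWframings}. First I would recall that $W_{g,1}$ is $(2k-2)$-connected with $H_{2k-1}(W_{g,1};\bZ) \cong \bZ^{2g}$ carrying the intersection form, which is symmetric if $2k-1$ is odd and alternating if $2k-1$ is even; in either case the standard hyperbolic basis coming from the $S^{2k-1} \times S^{2k-1}$ summands realizes it, and $\pi_0\Diff_\partial(W_{g,1})$ acts through the full isometry group of this form, which is $\Sp_{2g}(\bZ)$ (in the symmetric case one must be slightly careful and work with the appropriate orthogonal group, but the notation $\Sp$ here refers to the isometry group of the relevant bilinear form on the based space). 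The key structural input is that a stable framing of $W_{g,1}$ (equivalently a lift of its Gauss map to $\Top$, or simply a nullhomotopy of the stable Gauss map since $W_{g,1}$ is stably parallelisable) gives rise to a \emph{quadratic refinement} $q \colon H_{2k-1}(W_{g,1};\bZ) \to \bZ/2$ (or $\bZ/2$, depending on parity conventions and on whether $2k-1 \in \{3,7\}$, where the Hopf-invariant-one phenomenon means the refinement can take values in a larger group or have nonzero Arf invariant) by the usual construction: an embedded sphere $S^{2k-1} \hookrightarrow W_{g,1}$ has a normal bundle, and the framing allows one to compare the induced framing of the normal bundle with the bounding one, yielding an obstruction class; this is exactly the Wall/Kervaire quadratic form.

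Next I would argue that the action on $H_{2k-1}$ descends to an action of the framed mapping class group $\Gamma^{\mathrm{sfr}}_\partial(W_{g,1},\ell_W)$ which \emph{preserves} this quadratic refinement $q$, simply because the framing is part of the data being acted on and $q$ is natural in the framed manifold; hence we obtain a homomorphism $\Gamma^{\mathrm{sfr}}_\partial(W_{g,1},\ell_W) \to \Sp^q_{2g}(\bZ)$ to the stabiliser of $q$ inside $\Sp_{2g}(\bZ)$. The kernel of this homomorphism consists of framed diffeomorphisms acting trivially on homology; by the standard analysis of mapping class groups of highly-connected manifolds (as in \cite{K, KRWframings}, building on \cite{Kreck} and Wall), such diffeomorphisms are, up to the subgroup generated by "twists" that the framing data kills, detected by their effect on the framing alone, and the upshot is that the kernel is precisely the central subgroup $\Theta^{\mathrm{sfr}}_{4k-1}$ of framed homotopy spheres glued in along a disc — which is exactly the subgroup we have already shown is central and injective. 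This identifies $\Gamma^{\mathrm{sfr}}_\partial(W_{g,1},\ell_W)/\Theta^{\mathrm{sfr}}_{4k-1}$ with a subgroup of $\Sp^q_{2g}(\bZ)$; surjectivity onto $\Sp^q_{2g}(\bZ)$ then follows because every isometry of the quadratic form is realised by a diffeomorphism (a Dehn-twist-type generation result for $\Sp^q_{2g}(\bZ)$, together with the fact that each such generator lifts to a framed diffeomorphism — the relevant obstruction to lifting lies in a group killed by choosing $q$-compatible generators).

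Finally I would address the Arf-invariant subtlety in dimensions $2k-1 = 3, 7$: here the normal bundle of an embedded $S^{2k-1}$ in its sphere bundle can be stably but not unstably trivial in a way governed by the Hopf invariant, so the quadratic refinement may legitimately have Arf invariant $1$ (or take values allowing an "$a$" variant as in the statement), and which case occurs depends on the specific framing $\ell_W$; I would simply record that both possibilities arise and that the argument above is insensitive to which one holds, since in all cases the target is the isometry group of \emph{whatever} refinement $\ell_W$ produces.

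I expect the main obstacle to be the precise identification of the kernel — i.e.\ showing that a framed diffeomorphism acting trivially on $H_{2k-1}(W_{g,1};\bZ)$ is, after composing with a framed boundary-disc diffeomorphism, isotopic through framed diffeomorphisms to the identity. This is where one genuinely needs the full strength of the framed analogues of Kreck's isotopy classification and the calculations of \cite{GRWAb, K, KRWframings}, and where one must be careful that the only "exotic" contributions are the homotopy-sphere ones already accounted for rather than some additional homotopy-theoretic phenomena coming from the difference between $\Top$ and $\OO$ (these extra contributions are precisely why the stably-framed case is cleaner than the framed case, and why we treat it first). The realisation (surjectivity) statement is comparatively routine given the generation of $\Sp^q_{2g}(\bZ)$ by transvections, but still requires checking that each transvection lifts compatibly with the framing.
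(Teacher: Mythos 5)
Your overall shape is right --- the quotient should embed into the stabiliser of the Wall quadratic refinement, with the stably framed homotopy spheres as kernel --- but the two points you defer to ``standard analysis'' are exactly where the content of the lemma lies, and the sketch does not close them. First, $\Gamma^{\mathrm{sfr}}_\partial(W_{g,1},\ell_W)$ is $\pi_1$ of a homotopy quotient, so the kernel of its map to $\pi_0\Diff_\partial(W_{g,1})$ is not just the image of $\Theta^{\mathrm{sfr}}_{4k-1}$: it receives all of $\pi_1$ of the space of stable framings, i.e.\ $\pi_1\map_\partial(W_{g,1},\SO)$, modulo the image of $\pi_1\Diff_\partial(W_{g,1})$. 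Only the summand $\pi_{4k-1}(\SO)$ is accounted for by framed spheres; the remaining piece $\pi_1\map_*(W_{g,1},\SO)\cong H_{2k-1}(W_{g,1};\pi_{2k}(\SO))$ would contribute extra kernel for the action on homology, not contained in $\Theta^{\mathrm{sfr}}_{4k-1}$, and the lemma would fail as stated, unless this piece is hit by loops of diffeomorphisms. The paper proves exactly this: loops of diffeomorphisms of $D^{2k-1}\times S^{2k-1}$ associated to elements of $\pi_{2k}(\SO(2k))$ are implanted along the generators of $\pi_{2k-1}(W_{g,1})$, and one then needs surjectivity of the stabilisation $\pi_{2k}(\SO(2k))\to\pi_{2k}(\SO)$, which in the nontrivial cases requires \cite{DavisMahowald}. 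Your proposal never engages with this loop-of-framings contribution at all.

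Second, on the $\pi_0$ level the subgroup of $\pi_0\Diff_\partial(W_{g,1})$ acting trivially on $H_{2k-1}$ is strictly larger than the disc-supported (homotopy-sphere) subgroup: by Kreck there is an additional Torelli-type piece coming from twisting normal data along the core spheres. So ``the kernel is precisely $\Theta^{\mathrm{sfr}}_{4k-1}$'' does not follow from naturality of the quadratic refinement; one must show that these extra Torelli elements cannot fix the homotopy class of the stable framing. The paper does this by reducing (via the surjectivity above and \cite[Lemma 6.3]{KRWframings}) to the stabiliser of the homotopy class of $\ell_W$ under $\Gamma_g/\Theta_{4k-1}$ and then invoking Krannich's analysis \cite[Section 2, Lemma 2.1]{K}, which shows $I_g/\Theta_{4k-1}$ acts freely on the set of stable framings, giving injectivity into $\Sp_{2g}(\bZ)$. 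You correctly flag this as ``the main obstacle'', but flagging it is not proving it; as written the proposal assumes the conclusion at precisely this step. By contrast, identifying the image as $\Sp^q_{2g}(\bZ)$, respectively $\Sp^{q}_{2g}(\bZ)$ or $\Sp^a_{2g}(\bZ)$ when $2k-1=3,7$, is in the paper simply a citation to \cite[Section 6]{KRWframings}, so your looser treatment of surjectivity is a lesser issue.
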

\begin{proof}
We will use \cite{KRWframings}, and will assume familiarity with the notation of that paper: for compatibility we temporarily denote $\check{\Gamma}_g^{\mathrm{sfr},\ell_W} = \Gamma^{\mathrm{sfr}}_\partial(W_{g,1},\ell_W)$. We develop the following commutative diagram.

\begin{equation*}
\begin{tikzcd}[column sep = 3ex]
 & 0 \dar & \pi_1\mathrm{Diff}_\partial(W_{g,1}) \dar \arrow[rd, dashed] \\
0 \rar & \pi_{4k-1}(\mathrm{SO}) \rar \dar & \pi_1 \mathrm{map}_\partial(W_{g,1}, \mathrm{SO}) \dar \rar & \pi_1 \mathrm{map}_*(W_{g,1}, \mathrm{SO}) \rar & 0 \\
0 \rar & \Theta_{4k-1}^\mathrm{sfr} \rar \arrow[d, two heads] & \check{\Gamma}_g^{\mathrm{sfr},\ell_W} \arrow[d, two heads] \rar & \check{\Gamma}_g^{\mathrm{sfr},\ell_W}/\Theta_{4k-1}^\mathrm{sfr} \rar \dar & 0\\
0 \rar & \Theta_{4k-1} \rar & \Gamma^{\mathrm{sfr}, [\ell_W]}_g \rar & \Gamma^{\mathrm{sfr}, [\ell_W]}_g/\Theta_{4k-1} \rar & 0
\end{tikzcd}
\end{equation*}

The dashed map is surjective, as follows. Associated to $\xi \in \pi_{2k}(\mathrm{SO}(2k)) = \pi_1(\Omega^{2k-1} \mathrm{SO}(2k))$ there is a loop of diffeomorphisms of $D^{2k-1} \times S^{2k-1}$ which are the identity on the boundary, and this may be implanted into $W_{g,1}$ along any generator $x$ of $\pi_{2k-1}(W_{g,1}) = \bZ\{a_1, b_1, a_2, b_2, \ldots, a_g, b_g\}$ to give elements of $\pi_1\mathrm{Diff}_\partial(W_{g,1})$. Under the dashed map to $\pi_1 \mathrm{map}_*(W_{g,1}, \mathrm{SO}) \cong H_{2k-1}(W_{g,1} ; \pi_{2k}(\mathrm{SO}))$ this is sent to $x$ times the image of $\xi$ under stabilisation $\pi_{2k}(\mathrm{SO}(2k)) \to \pi_{2k}(\mathrm{SO})$. This stabilisation map is surjective for all $k$ (the target is mostly zero, and when it is not use \cite[Theorem 1.1]{DavisMahowald}), so the dashed map is surjective.

From this it follows that the induced map $\check{\Gamma}_g^{\mathrm{sfr},\ell_W}/\Theta_{4k-1}^\mathrm{sfr} \to \Gamma^{\mathrm{sfr}, [\ell_W]}_g/\Theta_{4k-1}$ is an isomorphism.

By \cite[Lemma 6.3]{KRWframings} we have $\Gamma^{\mathrm{sfr}, [\ell_W]}_g = \Gamma^{\mathrm{sfr}, [[\ell_W]]}_g$, which is the stabiliser of $\ell_W$ for the action of $\Gamma_g$ on the set of stable framings on $W_{g,1}$ relative to a point. Thus $\Gamma^{\mathrm{sfr}, [\ell_W]}_g/\Theta_{4k-1}$ is the stabiliser for the induced action of $\Gamma_g/\Theta_{4k-1}$ on this set. This action is analysed in \cite[Section 2]{K}, and it follows (using \cite[Lemma 2.1]{K}) that the subgroup $I_g/\Theta_{4k-1} < \Gamma_g/\Theta_{4k-1}$ acts freely, and hence that the homomorphism 
$$\Gamma^{\mathrm{sfr}, [[\ell_W]]}_g/\Theta_{4k-1} \lra \Sp_{2g}(\bZ)$$
is injective. But the image $G_g^{\mathrm{sfr}, [[\ell_W]]}$ of $\Gamma^{\mathrm{sfr}, [[\ell_W]]}_g$ in $\Sp_{2g}(\bZ)$ is described in \cite[Section 6]{KRWframings}, and is $\Sp_{2g}^q(\bZ)$ for $2k-1 \neq 3, 7$ and $\Sp_{2g}^q(\bZ)$ or $\Sp_{2g}^a(\bZ)$ if $2n-1 = 3, 7$ depending on the Arf invariant of a certain quadratic form associated to the framed manifold $(W_{g,1}, \ell_W)$, see \cite[Sections 2.5.2, 2.5.3]{KRWframings}.
%In the latter cases the image depends on the Arf invariant of a the quadratic form associated to the framing $\ell_W$. But the $\ell_W$ we are interested in bounds a framing of $V_g$, so has Arf invariant 0, and hence the image is also $Sp_{2g}^q(\bZ)$ in these cases.
\end{proof}

Assigning to an element of
$\Gamma^{\mathrm{sfr}}_\partial(W_{g,1},\ell_W)$ the stably-framed
manifold given by its mapping torus defines a homomorphism
$\Gamma^{\mathrm{sfr}}_\partial(W_{g,1},\ell_W) \to \pi_{4k-1}^s$,
extending $\Theta_{4k-1}^\mathrm{sfr} \to \pi_{4k-1}^s$. Together with
the homomorphism described in Lemma \ref{lem:Spq} this yields a central extension
\begin{equation}\label{eq:DeligneBig}\tag{1}
0 \lra \bZ=\mathrm{bP}_{4k}^\mathrm{sfr} \lra \Gamma^{\mathrm{sfr}}_\partial(W_{g,1},\ell_W) \lra \pi_{4k-1}^s \times \Sp_{2g}^{q \text{ or } a}(\bZ)  \lra 0.
\end{equation}
Following \cite[Section 7]{GRWAb}, we may understand something about this extension by knowing the abelianisation of $\Gamma^{\mathrm{sfr}}_\partial(W_{g,1},\ell_W)$, which for large enough $g$ can be determined by applying \cite[Corollary 1.8]{GRW}.  Namely, the corresponding Madsen--Tillmann spectrum is $\Sigma^{\infty-(4k-2)} \OO/\OO(4k-2)_+$ and using \cite[Lemma 5.2]{GRWAb} it follows that the abelianisation is 
$$\pi_{4k-1}^s(\Sigma^\infty \OO/\OO(4k-2)_+) = \pi_{4k-1}^s \oplus \bZ/4.$$
This is the same as the abelianisation of $\pi_{4k-1}^s \times \Sp_{2g}^{q \text{ or } a}(\bZ)$ by e.g.\ \cite[Section 4.1]{KRWframings}, so by considering the Serre spectral sequence for this extension it follows that the class $e \in H^2(\pi_{4k-1}^s \times \Sp_{2g}^{q \text{ or } a}(\bZ) ; \bZ)$ classifying it yields a homomorphism
$$e(-) : H_2(\pi_{4k-1}^s \times \Sp_{2g}^{q \text{ or } a}(\bZ) ; \bZ) \lra \bZ$$
that is surjective. As $\pi_{4k-1}^s$ is a finite group, its presence does not affect the image of this map. It follows that restricting \eqref{eq:DeligneBig} to the second factor defines a central extension
\begin{equation}\label{eq:DeligneExt}\tag{2}
0 \lra \bZ=\mathrm{bP}_{4k}^\mathrm{sfr} \lra E \lra \Sp_{2g}^{q \text{ or } a}(\bZ) \lra 0
\end{equation}
whose Euler class is such that $H_2(\Sp_{2g}^{q \text{ or } a}(\bZ);\bZ) \to \bZ$ is surjective, and this $E$ is a finite index normal subgroup of $\Gamma^{\mathrm{sfr}}_\partial(W_{g,1},\ell_W)$.

\begin{comment}
This expresses $\Gamma^{\mathrm{sfr}}_\partial(W_{g,1},\ell_W) \cong \pi_{4k-1}^s \times E$ where $E := \Ker(\Gamma^{\mathrm{sfr}}_\partial(W_{g,1},\ell_W) \to \pi_{4k-1}^s)$ participates in a central extension
\begin{equation}\label{eq:DeligneExt}%\tag{1}
0 \lra \bZ=\mathrm{bP}_{4k}^\mathrm{sfr} \lra E \lra Sp_{2g}^q(\bZ) \lra 0.
\end{equation}
Following \cite[Section 7]{GRWAb}, we may determine this extension by knowing the abelianisation of $E$, which follows from knowing the abelianisation of $\Gamma^{\mathrm{sfr}}_\partial(W_{g,1},\ell_W)$, and for large enough $g$ this can be determined by applying \cite[Corollary 1.8]{GRW}. Namely, the corresponding Madsen--Tillmann spectrum is $\Sigma^{\infty-(4k-2)} O/O(4k-2)_+$ and using \cite[Lemma 5.2]{GRWAb} it follows that the abelianisation is 
$$\pi_{4k-1}^s(\Sigma^\infty O/O(4k-2)_+) = \pi_{4k-1}^s \oplus \bZ/4.$$
 From this it follows that the abelianisation of $E$ is $\bZ/4$, which is the same as that of $Sp_{2g}^q(\bZ)$ by e.g.\ \cite[Lemma 7.4]{GRWAb}. By considering the Serre spectral sequence for the extension \eqref{eq:DeligneExt} it follows that the class $e \in H^2(Sp_{2g}^q(\bZ) ; \bZ)$ of this extension yields a homomorphism $H_2(Sp_{2g}^q(\bZ);\bZ) \to \bZ$ which is surjective. 
 \end{comment}
 
As recalled in \cite[Section 4.1]{KRWframings} the groups $\Sp_{2g}^q(\bZ)$ and $\Sp_{2g}^a(\bZ)$ enjoy homological stability with respect to $g$, and have the same stable homology, so calculations stated for the first kind are also valid for the second kind. It then follows from \cite[Lemma 3.15, (3.10), Definition 3.17]{K} (or \cite[Lemma 7.5, proof of Theorem 7.7]{GRWAb}) that for large enough $g$ we have 
$$H^2(\Sp_{2g}^{q \text{ or } a}(\bZ) ; \bZ) = \bZ\{\tfrac{\mathrm{sgn}}{8}\} \oplus \bZ/4,$$
so $e = \pm \tfrac{\mathrm{sgn}}{8}$ modulo torsion. Using \cite[Lemma 5.1 (iii)]{KKM} the finite residual of $E$ is the same as the finite residual of the extension corresponding to $\tfrac{\mathrm{sgn}}{8}$, and by \cite[Lemma 5.2]{KKM} this is the subgroup $\bZ = \mathrm{bP}_{4k}^\mathrm{sfr} < E$.

As $E$ has finite index in $\Gamma^{\mathrm{sfr}}_\partial(W_{g,1},\ell_W)$ it follows that the finite residual of the latter group is also the subgroup $\bZ = \mathrm{bP}_{4k}^\mathrm{sfr}$, so every element of $\mathrm{bP}_{4k-1} \leq \pi_0\mathrm{Diff}^+(W_g)$ lifts to an element of $\Gamma^{\mathrm{sfr}}_\partial(W_{g,1},\ell_W)$  which lies in the finite residual of this group, as required.

\subsection*{The framed mapping class group}

To extend this to the framed, rather than stably-framed, mapping class group, we make use of the following lemma. We are grateful to Henry Wilton for advice on it.

\begin{lemma}\label{lem:AscendingFinRes}
Let
$$1 \lra F \lra G \lra H \lra 1$$
be a group extension with $F$ finite. Then the induced map $\fr(G) \to \fr(H)$ of finite residuals is surjective.
\end{lemma}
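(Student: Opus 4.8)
The plan is to chase finite-index subgroups through the extension. Given the extension $1 \to F \to G \to H \to 1$ with $F$ finite, the key point is that $\fr(G)$ surjects onto $\fr(H)$ under the projection $\pi : G \to H$. So I would fix $h \in \fr(H)$, pick any lift $g \in G$ with $\pi(g) = h$, and aim to modify $g$ within its coset $gF$ to land in $\fr(G)$.

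First I would record the general fact that for a group extension with $F$ finite, $\fr(G)$ maps into $\fr(H)$: if $N \trianglelefteq H$ has finite index, then $\pi^{-1}(N) \trianglelefteq G$ has finite index, so $\fr(G) \subseteq \pi^{-1}(N)$ and hence $\pi(\fr(G)) \subseteq N$; intersecting over all such $N$ gives $\pi(\fr(G)) \subseteq \fr(H)$. For surjectivity, the crucial observation is that $\fr(G)$ is characteristic (hence normal) in $G$, and $\pi(\fr(G))$ is therefore normal in $H$. I would then argue that $\pi(\fr(G))$ has finite index in $\fr(H)$: indeed $\fr(G)$ contains the finite residual of $\pi^{-1}(\fr(H))$, which is a finite-index over-group sitting inside $G$, and $\pi^{-1}(\fr(H))/\fr(G)$ maps onto $\fr(H)/\pi(\fr(G))$ with kernel controlled by $F$; more cleanly, one shows $\pi(\fr(G)) \supseteq \fr(H)$ directly. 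The cleanest route: replace $G$ by $G' := \pi^{-1}(\fr(H))$, an extension $1 \to F \to G' \to \fr(H) \to 1$ with $F$ finite and $\fr(H) = \fr(\fr(H))$, so it suffices to treat the case $H = \fr(H)$, i.e. $H$ has no proper finite-index subgroups, and show $\pi(\fr(G)) = H$.

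In that reduced situation, I would show $\fr(G)$ already surjects onto $H$. Let $N \trianglelefteq G$ have finite index. Then $\pi(N) \trianglelefteq H$ has index dividing $[G:N]$, but $H$ has no proper finite-index subgroup, so $\pi(N) = H$; equivalently $NF = G$. Thus every finite-index normal subgroup of $G$ surjects onto $H$. Now $\fr(G) = \bigcap_i N_i$ over a countable descending chain of finite-index normal subgroups (one may arrange this since a group need not be finitely generated — but here $G$ is finitely generated in our application; in general one takes the family of all finite-index normal subgroups). The subtlety is that an intersection of subgroups each surjecting onto $H$ need not surject onto $H$. This is precisely where finiteness of $F$ is used: since $F$ is finite, the images $N_i \cap F$ form a descending chain of subgroups of the finite group $F$, hence stabilise, say $N_i \cap F = K$ for all large $i$; then $N_i F = G$ gives $N_i/K \cong H$ compatibly, and $\fr(G) = \bigcap N_i$ has $\bigcap N_i \cdot F = \bigcap(N_i F) \supseteq \bigcap N_i F$... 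I would instead argue: the chain $\{N_i F\}$ of subgroups of $G$ containing $F$ is descending, each equal to $G$, so trivially $\bigcap_i N_i F = G$; and since $F$ is finite, $\bigcap_i (N_i F) = (\bigcap_i N_i) F = \fr(G) \cdot F$ — here finiteness of $F$ is what allows the intersection to commute with multiplication by $F$, because for each $g \in G$ the set $\{i : g \in N_i F\}$ being all of $\mathbb{N}$ and $gF$ finite forces some fixed $gf \in N_i$ for infinitely many, hence cofinally many, $i$. Therefore $\fr(G) \cdot F = G$, i.e. $\pi(\fr(G)) = H$, as desired.

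The main obstacle is exactly this last interchange-of-limits point: the finite residual is an intersection of a (possibly infinite) family of finite-index subgroups, and one must leverage the finiteness of $F$ to conclude that this intersection still surjects onto $H$ after the reduction to $H = \fr(H)$. Once that combinatorial lemma about descending chains and multiplication by a finite subgroup is in place, the rest is formal. I would present the argument by first proving the reduction $H \rightsquigarrow \fr(H)$, then the statement that every finite-index normal subgroup of $G$ contains $F$ and surjects onto $\fr(H)$, and finally the interchange lemma; combining these yields $\fr(G) F = \pi^{-1}(\fr(H))$, whence $\fr(G) \to \fr(H)$ is onto.
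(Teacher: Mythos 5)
Your reduction to the case $H=\fr(H)$ rests on the claim that $\fr(\fr(H))=\fr(H)$, i.e.\ that the finite residual has no proper finite-index subgroups. That claim is false in general: $\fr(H)$ can be nontrivial and yet residually finite, in which case $\fr(\fr(H))$ is trivial. This happens precisely in the situation the lemma is used for in this paper: the relevant group $H$ is a finite-index subgroup of $\Gamma^{\mathrm{sfr}}_\partial(W_{g,1},\ell_W)$ (or the group $E$, a central extension of $\Sp_{2g}^{q}(\bZ)$ by $\bZ$), whose finite residual is the central copy of $\bZ$, and $\fr(\bZ)=1$. So after replacing $G$ by $G'=\pi^{-1}(\fr(H))$, your reduced-case argument only yields $\pi(\fr(G'))\supseteq \fr(\fr(H))$, which may be trivial, not all of $\fr(H)$; the reduction therefore fails exactly where it is needed. (Smaller points: $\pi^{-1}(\fr(H))$ need not have finite index in $G$, though $\fr(G')\subseteq\fr(G)$ does hold for an arbitrary subgroup $G'\leq G$, so this alone is not fatal; your closing assertion that every finite-index normal subgroup of $G$ contains $F$ is also false, though the statement you actually use in the reduced case, $NF=G$, is the right one; and the appeal to a countable descending chain is neither available for general $G$ nor necessary --- one should work with the directed family of all finite-index normal subgroups and use the finite intersection property in the finite set $gF$.)

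The good news is that your pigeonhole idea, exploiting that the fibre $gF$ is finite, proves the lemma directly with no reduction at all: given $h\in\fr(H)$ and a lift $g$, suppose no element of $gF$ lies in $\fr(G)$; for each $f\in F$ choose a finite-index normal subgroup $N_f\leq G$ with $gf\notin N_f$, and set $N=\bigcap_{f\in F}N_f$, a finite-index normal subgroup with $gF\cap N=\emptyset$, i.e.\ $g\notin NF$. Since $NF$ is a finite-index subgroup of $G$ containing $\ker\pi=F$, its image $\pi(N)=\pi(NF)$ is a finite-index subgroup of $H$ not containing $h$, contradicting $h\in\fr(H)$. This repaired argument is elementary and genuinely different from the paper's proof, which passes to the profinite completion $\widehat{G}$, shows that the image $F'$ of $F$ is finite, closed and normal, and uses that $\widehat{G}/F'$ is profinite (hence residually finite) to conclude that a lift of $h\in\fr(H)$ lies in $\fr(G)\cdot F$; but as written your proposal has a genuine gap at the reduction step.
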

\begin{proof}
Let $\iota(G) \leq \widehat{G}$ be the image of $G$ in its profinite completion. The image $F'$ of the finite group $F$ in $\widehat{G}$ is finite, and is therefore closed in the profinite topology (as this is Hausdorff and finite subspaces are compact and so closed). The normaliser of a closed subgroup is closed in the profinite topology: the normaliser of $F'$ contains $\iota(G)$ and so contains its closure $\widehat{G}$, and hence $F'$ is normal in $\widehat{G}$. Then $\widehat{G}/F'$ is the quotient by a closed normal subgroup so is again a profinite group, and we obtain a commutative diagram
\begin{equation*}
\begin{tikzcd}
1 \rar & F \rar \dar & G \rar{\pi} \dar{\iota} & H \rar \dar{\iota} & 1\\
1 \rar & F' \rar & \widehat{G} \rar{p} & \widehat{G}/F' \rar & 1.
\end{tikzcd}
\end{equation*}
Let $h \in \fr(H)$ and $g \in G$ be such that $\pi(g)=h$. Now $\iota(h)=1$ as $\widehat{G}/F'$ is profinite and hence residually finite. It follows that $\iota(g) \in F'$. Thus there is an $f \in F$ such that $\iota(f^{-1} \cdot g)=1$, i.e. $f^{-1} \cdot g \in \fr(G)$. As $\pi(f^{-1} \cdot g) = \pi(g)=h$, this proves the claim.
\end{proof}

After choosing a reference framing of $W_{g,1}$, the map from the space of framings to the space of stable framings corresponds to
$$\map_\partial(W_{g,1}, \mathrm{SO}(4k-2)) \lra \map_\partial(W_{g,1},\mathrm{SO}).$$
\begin{lemma}
The homotopy fibre $F$ of this map over any point has $\pi_0(F)= \bZ/4$ and $\pi_1(F)$ finite.
\end{lemma}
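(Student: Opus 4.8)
The plan is to identify $F$ with a mapping space into a highly-connected space, read off $\pi_0$ and $\pi_1$ from the cell structure of $W_{g,1}$, and then reduce everything to two homotopy groups of $\SO/\SO(4k-2)=\mathrm{O}/\mathrm{O}(4k-2)$. First I would apply $\map_\partial(W_{g,1},-)$ to the fibration sequence
$$\Omega(\SO/\SO(4k-2)) \lra \SO(4k-2) \lra \SO \lra \SO/\SO(4k-2),$$
which yields a fibration sequence and so identifies the homotopy fibre over the basepoint with $\map_\partial(W_{g,1},V)$, where $V:=\Omega(\SO/\SO(4k-2))$ is $(4k-4)$-connected. Since (after choosing the reference framing to be standard near $\partial W_{g,1}$) the map in the lemma is a homomorphism of topological groups, and it is surjective on path components — the homomorphism $[W_g,\SO]_\ast\to[W_g,\SO/\SO(4k-2)]_\ast$ vanishes, because $\pi_{2k-1}(\SO/\SO(4k-2))=0$ and $\pi_{4k-2}(\SO)=0$ while $W_g=W_{g,1}/\partial W_{g,1}=\#^g(S^{2k-1}\times S^{2k-1})$ is built from cells of dimensions $0$, $2k-1$ and $4k-2$ — every homotopy fibre is equivalent to the one over the basepoint, so $F\simeq \map_\ast(W_g,V)$.

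Next I would use the cofibre sequence $\bigvee^{2g}S^{2k-1}\to W_g\to S^{4k-2}$ (the $(2k-1)$-skeleton followed by the top cell), which induces a fibration sequence
$$\Omega^{4k-2}V \lra \map_\ast(W_g,V) \lra \textstyle\prod^{2g}\Omega^{2k-1}V.$$
As $4k-1\geq 7$, i.e.\ $k\geq 2$, and $V$ is $(4k-4)$-connected, the base $\prod^{2g}\Omega^{2k-1}V$ is $(2k-3)$-connected, hence simply-connected; so the long exact sequence gives $\pi_0(F)=\pi_0\Omega^{4k-2}V=\pi_{4k-1}(\SO/\SO(4k-2))$ and exhibits $\pi_1(F)$ as a quotient of $\pi_1\Omega^{4k-2}V=\pi_{4k}(\SO/\SO(4k-2))$.

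It then remains to compute these two homotopy groups. Since $\mathrm{O}/\mathrm{O}(4k-2)=\SO/\SO(4k-2)$ is $(4k-3)$-connected and $4k\leq 2(4k-3)$ for $k\geq 2$, the Freudenthal theorem identifies them with $\pi^s_{4k-1}(\mathrm{O}/\mathrm{O}(4k-2))$ and $\pi^s_{4k}(\mathrm{O}/\mathrm{O}(4k-2))$. For the first, we computed above — in determining the abelianisation of $\Gamma^{\mathrm{sfr}}_\partial(W_{g,1},\ell_W)$ — that $\pi^s_{4k-1}(\Sigma^\infty \mathrm{O}/\mathrm{O}(4k-2)_+)=\pi^s_{4k-1}\oplus\bZ/4$, whence $\pi_{4k-1}(\mathrm{O}/\mathrm{O}(4k-2))=\bZ/4$. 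For the second it suffices to see $\pi_{4k}(\mathrm{O}/\mathrm{O}(4k-2))$ is finite: it is finitely generated (a stable homotopy group of a simply-connected space of finite type), and rationally trivial, since in the rationalised long exact sequence of $\SO(4k-2)\to\SO\to\SO/\SO(4k-2)$ one has $\pi_{4k}(\SO)\otimes\bQ=0$ and $\pi_{4k-1}(\SO(4k-2))\otimes\bQ\to\pi_{4k-1}(\SO)\otimes\bQ$ is an isomorphism (the degree $4k-1\equiv 3\pmod 4$ being one of the stable rational generators of $\SO(4k-2)$, as $4k-1\leq 8k-9$), so $\pi_{4k}(\SO/\SO(4k-2))\otimes\bQ=0$. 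Hence $\pi_1(F)$ is finite.

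The only non-formal ingredient is $\pi_{4k-1}(\mathrm{O}/\mathrm{O}(4k-2))=\bZ/4$, and this is exactly the $\bZ/4$ that appeared in the abelianisation of the stably-framed mapping class group; so the substantive (though already-completed) work is the Madsen--Tillmann/\cite{GRWAb} computation cited above, here combined with Freudenthal to pass from the stable to the unstable homotopy group. One minor point to double-check carefully is the claim that every homotopy fibre of the map in the lemma is equivalent to the one over the basepoint — i.e.\ that every stable framing of $W_{g,1}$ destabilises — which I would settle with the obstruction-theoretic argument indicated in the first paragraph rather than by invoking parallelisability directly.
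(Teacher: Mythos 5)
Your route is in substance the paper's own: identify the (basepoint) fibre with a mapping space into $\Omega(\SO/\SO(4k-2))$, use the cell structure of $W_g$ together with the $(4k-3)$-connectivity of $\SO/\SO(4k-2)$ to reduce everything to $\pi_{4k-1}$ and $\pi_{4k}$ of $\SO/\SO(4k-2)$, and then quote $\pi_{4k-1}(\SO/\SO(4k-2))=\bZ/4$ (you recover it from the stable statement via Freudenthal; the paper cites \cite{GRWAb} directly) and a rational homotopy argument for finiteness of $\pi_{4k}$. The paper packages the reduction slightly differently, comparing $\map_\partial(W_{g,1},-)$ with the based mapping spaces along a $(2k-2)$-connected map so that $F$ agrees with $\Omega^{4k-1}\SO/\SO(4k-2)$ in the relevant range, but your version of this part is correct and equivalent.

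The one genuine gap is exactly the point you flagged: the vanishing of $[W_g,\SO]_* \to [W_g,\SO/\SO(4k-2)]_*$ does \emph{not} follow from $\pi_{2k-1}(\SO/\SO(4k-2))=0$ and $\pi_{4k-2}(\SO)=0$. Those facts show that for $f\colon W_g \to \SO$ the composite is inessential on the $(2k-1)$-skeleton, hence equals $c^*(h)$ for the degree-one collapse $c\colon W_g \to S^{4k-2}$ and some $h\in\pi_{4k-2}(\SO/\SO(4k-2))\cong\bZ$ (this group is \emph{not} zero, as $4k-2$ is even); and $\pi_{4k-2}(\SO)=0$ only says $f$ is determined by its restriction to the skeleton, not that it factors through it. The class $h$ is a secondary (Toda-bracket type) invariant which the two cited groups do not control, so an extra argument is needed. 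The claim is nevertheless true: by the fibre sequence $\SO \to \SO/\SO(4k-2) \to B\SO(4k-2)$, anything in the image of $[W_g,\SO]_*$ has trivial underlying rank-$(4k-2)$ bundle, whereas the underlying bundle of $c^*(m\cdot\mathrm{gen})$ is the pullback along $c$ of the bundle on $S^{4k-2}$ with clutching $m$ times that of $TS^{4k-2}$, whose Euler class is $\pm 2m[W_g]^*\neq 0$ for $m\neq 0$; hence only $h=0$ occurs and the map is indeed surjective on $\pi_0$. Alternatively you can sidestep the surjectivity question altogether, as the paper implicitly does: translation in the topological group identifies the fibre over any point whose path component is hit with the basepoint fibre, and the only fibre used in the sequel is the one over $\ell^{\mathrm{sfr}}$, which is hit by the stabilisation of $\ell^{\mathrm{fr}}$.
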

\begin{proof}
This is a map of $H$-spaces so the homotopy fibres over any point are the same: we may therefore take fibres over the basepoint. The analogous map
$$\prod_{i=1}^{2g} \Omega^{2k-1} \mathrm{SO}(4k-2) \simeq \map_*(W_{g,1}, \mathrm{SO}(4k-2)) \lra \map_*(W_{g,1}, \mathrm{SO}) \simeq \prod_{i=1}^{2g} \Omega^{2k-1} \mathrm{SO}$$
with relaxed boundary conditions is $(2k-2)$-connected, so in this range $F$ agrees with the homotopy fibre $\Omega^{4k-1}\mathrm{SO}/\mathrm{SO}(4k-2)$ of
$$\Omega^{4k-2}\mathrm{SO}(4k-2) \lra \Omega^{4k-2}\mathrm{SO}.$$
Now $\pi_{4k-1}(\mathrm{SO}/\mathrm{SO}(4k-2)) = \bZ/4$ by \cite[Lemma 5.2]{GRWAb}, and it follows by elementary rational homotopy theory that $\pi_{4k}(\mathrm{SO}/\mathrm{SO}(4k-2))$ is finite.
\end{proof}

The homotopy fibre of
$$B\mathrm{Diff}_\partial^\mathrm{fr}(W_{g,1} ; \ell_\partial^\mathrm{fr}) \lra B\mathrm{Diff}_\partial^\mathrm{sfr}(W_{g,1} ; \ell_\partial^\mathrm{sfr})$$
over each point therefore has the same homotopy groups, leading to an exact sequence
$$\{\text{finite group}\} \lra \Gamma_\partial^\mathrm{fr}(W_{g,1}, \ell^\mathrm{fr}) \lra \Gamma_\partial^\mathrm{sfr}(W_{g,1}, \ell^\mathrm{sfr}) \overset{\curvearrowright}\lra \bZ/4.$$
This shows that $\Gamma_\partial^\mathrm{fr}(W_{g,1}, \ell^\mathrm{fr})$ surjects with finite kernel onto a finite index subgroup $H \leq \Gamma_\partial^\mathrm{sfr}(W_{g,1}, \ell^\mathrm{sfr})$. This must contain the finite residual $\bZ=\mathrm{bP}_{4k}^\mathrm{sfr}$ of $\Gamma_\partial^\mathrm{sfr}(W_{g,1}, \ell^\mathrm{sfr})$ as its finite residual, and by Lemma \ref{lem:AscendingFinRes} it follows that the finite residual of $\Gamma_\partial^\mathrm{fr}(W_{g,1}, \ell^\mathrm{fr})$ surjects onto $\bZ=\mathrm{bP}_{4k}^\mathrm{sfr} \leq \Gamma_\partial^\mathrm{sfr}(W_{g,1}, \ell^\mathrm{sfr})$, as required.
%%%%%%%%%%%%%%%%%%%%%%%%%%%%%%%%%%%%%%%%%%%%%
\bibliographystyle{amsalpha}
\bibliography{biblio}

\end{document}